\newcommand{\N}{{\mathds N}}
\newcommand{\R}{{\mathds R}}
\newcommand{\Z}{{\mathds Z}}
\newcommand{\Cov}{{\rm Cov}}
\newtheorem{lemma}{Lemma}[section]
\newtheorem{theorem}{Theorem}[section]
\begin{document}

\pagestyle{myheadings}
\markright{Bootstrap for dependent Hilbert space-valued random variables}

\title{Bootstrap for dependent Hilbert space-valued random variables with application to von Mises statistics}

\author{Herold Dehling\thanks{Fakult\"{a}t f\"{u}r Mathematik, Ruhr-Universit\"{a}t Bochum, 44780 Bochum, Germany}, Olimjon Sh. Sharipov \thanks{Institute of Mathematics, National University of Uzbekistan, 29 Dormon Yoli Str., Tashkent, 100125, Uzbekistan}, Martin Wendler  \thanks{Fakult\"{a}t f\"{u}r Mathematik, Ruhr-Universit\"{a}t Bochum, 44780 Bochum, Germany,  Email address:  Martin.Wendler@rub.de}}
\date{\today}

\maketitle

\begin{abstract}
Statistical methods for functional data are of interest for many applications. In this paper, we prove a central limit theorem for random variables taking their values in a Hilbert space. The random variables are assumed to be weakly dependent in the sense of near epoch dependence, where the underlying process fulfills some mixing conditions. As parametric inference in an infinite dimensional space is difficult, we show that the nonoverlapping block bootstrap is consistent. Furthermore, we show how these results can be used for degenerate von Mises-statistics.
\end{abstract}

\noindent {\itshape keywords:} absolute regularity, near epoch dependence, Hilbert space, block bootstrap; functional time series

\noindent {\itshape AMS 2010 subject classification: 62G09, 60F17, 62M10, 60F05}

\noindent This research was supported by the Collaborative Research Grant SFB 823 \emph{Statistical modelling of nonlinear dynamic processes}.

\section{Introduction and Main Results}

\subsection{Introduction}

In many medical and biological problems, when you are dealing with genomics, transcriptomics and proteomics data, the number of variables may be much larger than the number of subjects and traditional parametric methods cannot be used while in contrast particular nonparametric methods can, see Marozzi \cite{maro}. Imaging methods in medicine like functional magnetic resonance imaging lead to function valued time series, see Lange \cite{lang}, Aston and Kirch \cite{asto}. Furthermore, observations measured on a fine time grid can be often treated as a sequence of observed functions on longer periods instead of a seasonal time series with high resolution. Examples include environmental data, see H\"ormann and Kokoszka \cite{hoer}, or medical data, see Cuevas, Febrero, and Fraiman \cite{cuev}.

The first aim of this paper is to establish a bootstrap method for dependent Hilbert space-valued random variables. Assume that a sequence of Hilbert space-valued random variables $(X_n)_{n\in\Z}$ with mean $\mu$ satisfies a central limit, i.e. for any Borel set $A$ with $P(N\in\partial A)=0$ we have the convergence
\begin{equation}
\bigg| P\Big(\frac{1}{\sqrt{n}}\sum_{i=1}^n(X_i-\mu)\in A\Big)-P(N\in A)\bigg|\rightarrow 0
\end{equation}
as $n\rightarrow\infty$, where $N$ is a centered Gaussian Hilbert space-valued random variable with mean zero and covariance operator $V$.

In functional data analysis in order to make some statistical inferences (construct confidence regions and tests) on an unknown parameter $\mu$ that is asymptotically normal, one needs to calculate probabilities $P(N\in A)$ for different sets $A$. Such probabilities are not easy to calculate even in the case when the covariance operator $V$ is known and the set $A$ has a simple structure. This probability depends on infinite number of eigenvalues and eigenfunctions of the operator $V$. The situation becomes more complicated when $A$ is a ``bad'' Borel set and $V$ is unknown and has to be estimated. Thus unlike the one dimensional case where, in general, one can use both the central limit and as an alternative the bootstrap, in Hilbert space the bootstrap becomes more important.

Consistency of the bootstrap for the sample mean of the independent random variables with values in Banach spaces were established by Gin\'{e} and Zinn \cite{gine}. To the best of our knowledge there is only one paper by Politis and Romano \cite{poli} in which the validity of the stationary bootstrap for dependent Hilbert space valued random variables was proved. This is also stated in review papers by McMurry and Politis \cite{mcmu} and Gon\c{c}alves and Politis \cite{gon2}. Up to now it is an open problem whether the bootstrap methods with fixed block length can be used in Hilbert space. We will establish a strong consistency of nonoverlapping block bootstrap for the sample mean of dependent Hilbert space-valued random variables. We assume that the time series is near epoch dependent on an unobserved underlying process which is absolutely regular. This is a more general model than the strong mixing assumed by Politis and Romano \cite{poli}. Also, their result is restricted to bounded random variables.

The second aim of the paper is a bootstrap for von Mises statistics of dependent observations. Bootstrap for von Mises and U-statistics with nondegenerate kernel were studied by Arcones and Gine \cite{arco}, Dehling and Mikosch \cite{deh3} in the case of independent observations and by Leucht and Neumann \cite{leu2}, \cite{leu3}, Leucht \cite{leuc} in the case of dependent observations. We want to show that the validity of the bootstrap for von Mises and U-statistics with degenerate kernel can be proved using bootstrap for Hilbert space-valued random variables.

The paper is organized as follows: In the next subsection we will formulate the central limit theorem for stationary sequences of near epoch dependent Hilbert space-valued  random variables that will be used in the proof of the next theorem. The central limit theorem for mixing Hilbert space-valued random variables was studied in Kuelbs and Philipp \cite{kuel}, Dehling \cite{deh1}, Maltsev and Ostrovskii \cite{Mats}, Zhurbenko and Zuparov \cite{zhur}. Under near epoch dependence, a central limit theorem was proved by Chen and White, a weak invariance principle was given by Berkes, Horv\'ath, and Rice \cite{berk}. Subsection \ref{sub3} is devoted to the bootstrap for Hilbert space-valued random variables. In this section we will formulate a theorem which establishes the strong consistency of the nonoverlapping block bootstrap for the sample mean of near epoch dependent Hilbert space-valued random variables. In subsection \ref{sub4} we will give a theorem on the validity of the bootstrap for von Mises statistics of near epoch dependent observations. And finally proofs will be given in section \ref{sec3} where we will use preliminary results from section \ref{sec2}.

\subsection{\label{sub2} Central Limit Theorem for Hilbert Space-Valued Functionals of Mixing Random Variables}

Let $H$ be a separable Hilbert space with the inner product $\langle\cdot,\cdot\rangle$ and norm $\left\|\cdot\right\|=\sqrt{\langle\cdot,\cdot\rangle}$. Consider a two-sided, stationary sequence $(\xi_n)_{n\in\Z}$ of random variables with values in a separable measurable space $S$. We say that $(X_n)_{n\in\Z}$ is a functional of $(\xi_n)_{n\in\Z}$ if there exists a measurable function $f:S^{\Z}\rightarrow H$ such that
\begin{equation}
X_n=f\left((\xi_{n+i})_{i\in\N}\right).
\end{equation}
We say that $f$ is a 1-approximating functional (or near epoch dependent) if there exists a sequence $(a_m)_{m\in\N}$ with $a_m\rightarrow 0$ as $m\rightarrow 0$ and for every $m$ a function $f_m:S^{2m+1}\rightarrow H$ such that
\begin{equation}
E\left\|X_0-f_m(\xi_{-m},\ldots,\xi_m)\right\|\leq a_m \ \ \ \text{for all} \ \ m\in\N.\label{line3}
\end{equation}
As convergence in $L_2$ implies convergence in $L_1$, the 1-approximating property is more general than $L_2$ near epoch dependence, which is used more often in the literature. In what follows, we will assume that the sequence $(\xi_n)_{n\in\Z}$ is absolutely regular ($\beta$-mixing). We define the coefficients of absolute regularity $(\beta_m)_{m\in\Z}$ by
\begin{equation}
\beta_m=E\Big[\sup_{A\in\mathcal{F}_{m}^\infty}\left(P(A|\mathcal{F}_{-\infty}^{0})-P(A)\right)\Big],
\end{equation}
where $\mathcal{F}_{a}^b$ is the $\sigma$-field generated by $\xi_{a},\ldots,\xi_b$, and call the sequence $(\xi_n)_{n\in\Z}$ absolutely regular if $\beta_m\rightarrow 0$ as $m\rightarrow\infty$. For more details on absolute regularity, see the book of Bradley \cite{brad}. Approximating functionals of underlying absolutely regular sequences cover many examples of times series, e.g. linear processes or expanding dynamical systems, see Hofbauer and Keller \cite{hofb}.

The first result of this paper is a central limit theorem for approximating functionals of absolutely regular sequences:

\begin{theorem}\label{theo1} Let $(X_n)_{n\in\Z}$ be a 1-approximating functional of a stationary, absolutely regular sequence $(\xi_n)_{n\in\Z}$ and assume that the following conditions hold for some $\delta>0$
\begin{enumerate}
\item $E\left\|X_1\right\|^{2+\delta}<\infty$,
\item $\sum_{m=1}^\infty(a_m)^{\delta/(1+\delta)}<\infty$,
\item $\sum_{m=1}^\infty(\beta_m)^{\delta/(2+\delta)}<\infty$.
\end{enumerate}
Then $(X_n)_{n\in\N}$ satisfies the central limit theorem, i.e. the weak convergence
\begin{equation}
\frac{1}{\sqrt{n}}\sum_{i=1}^n(X_i-\mu)\Rightarrow N_1\label{theo1l1}
\end{equation}
as $n\rightarrow\infty$ where $N_1$ is a $H$-valued Gaussian random variable with $N(0,V)$ distribution with mean 0 and covariance operator $V$ defined by
\begin{equation}
\langle Vx,y\rangle=\sum_{j=-\infty}^\infty E\langle X_0,x\rangle\langle X_j,y\rangle.
\end{equation}
and
\begin{equation}
E\Big\|\frac{1}{\sqrt{n}}\sum_{i=1}^n(X_i-\mu)\Big\|^2\rightarrow E\left\|N_1\right\|^2\label{theo1l2}.
\end{equation}

\end{theorem}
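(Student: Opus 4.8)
The plan is to reduce the infinite-dimensional CLT to (i) convergence of all finite-dimensional projections and (ii) a uniform tail-flatness (tightness) estimate, following the classical two-step criterion for weak convergence of Hilbert-space valued random elements. Fix an orthonormal basis $(e_k)_{k\ge1}$ of $H$, let $\Pi_d$ be the orthogonal projection onto $\mathrm{span}\{e_1,\dots,e_d\}$ and $R_d=I-\Pi_d$, and write $Y_n=\frac{1}{\sqrt n}\sum_{i=1}^n(X_i-\mu)$.

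For the finite-dimensional convergence I would use the Cram\'er--Wold device. For any $x\in H$ the real-valued sequence $(\langle X_n-\mu,x\rangle)_n$ is itself a $1$-approximating functional of $(\xi_n)$: by Cauchy--Schwarz its approximating constants are bounded by $\|x\|\,a_m$, and $E|\langle X_1,x\rangle|^{2+\delta}\le\|x\|^{2+\delta}E\|X_1\|^{2+\delta}<\infty$, so all three hypotheses transfer verbatim. Hence the one-dimensional central limit theorem for $1$-approximating functionals of absolutely regular sequences applies and yields $\frac{1}{\sqrt n}\sum_{i=1}^n\langle X_i-\mu,x\rangle\Rightarrow N(0,\langle Vx,x\rangle)$ with $\langle Vx,x\rangle=\sum_{j\in\Z}\Cov(\langle X_0,x\rangle,\langle X_j,x\rangle)$. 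Applying this to $x=\sum_{j=1}^d c_je_j$ and reading off the long-run variance, Cram\'er--Wold gives the joint convergence $\Pi_dY_n\Rightarrow N(0,\Pi_dV\Pi_d)=\Pi_dN_1$ for every $d$.

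For the second moments, write $\gamma_l=E\langle X_0-\mu,X_l-\mu\rangle$, so that by stationarity $E\|Y_n\|^2=\sum_{|l|<n}\bigl(1-\tfrac{|l|}{n}\bigr)\gamma_l$. The key tool here is a covariance inequality for Hilbert-valued $1$-approximating functionals (a preliminary result of Section~2): approximating $X_0,X_l$ by the block functionals $f_m(\xi_{-m},\dots,\xi_m)$ and $f_m(\xi_{l-m},\dots,\xi_{l+m})$ with $m=\lfloor l/3\rfloor$, splitting $\langle X_0-\mu,X_l-\mu\rangle$ into the approximation error and a term supported on disjoint blocks, and combining the $(2+\delta)$-moment bound with the $\beta$-mixing coupling, should yield $|\gamma_l|\le C\bigl(a_m^{\delta/(1+\delta)}+\beta_m^{\delta/(2+\delta)}\bigr)$. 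By conditions 2 and 3 this is summable, so $\sum_l|\gamma_l|<\infty$; in particular $V$ is trace class (by Parseval its trace equals $\sum_l\gamma_l$), $N_1$ is a genuine Gaussian element, and dominated convergence gives $E\|Y_n\|^2\to\sum_l\gamma_l=\operatorname{tr}(V)=E\|N_1\|^2$, which is exactly (\ref{theo1l2}).

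The main obstacle is the tail-flatness estimate $\lim_{d\to\infty}\sup_n E\|R_dY_n\|^2=0$. Since $R_d(X_i-\mu)$ is again a $1$-approximating functional with the same constants (projections are contractions, so both the approximation errors and the $(2+\delta)$-moments only decrease), the same covariance inequality applies to $\gamma_l^{(d)}=E\langle R_d(X_0-\mu),R_d(X_l-\mu)\rangle$ and gives $\sup_n E\|R_dY_n\|^2\le\sum_{l\in\Z}|\gamma_l^{(d)}|$. I would bound this uniformly in $n$ by splitting at a threshold $L$: for $|l|>L$ the covariance inequality controls $|\gamma_l^{(d)}|$ by $C\bigl(a_{\lfloor l/3\rfloor}^{\delta/(1+\delta)}+\beta_{\lfloor l/3\rfloor}^{\delta/(2+\delta)}\bigr)$ uniformly in $d$ (the moment factor being dominated by $E\|X_0-\mu\|^{2+\delta}$), so this is the tail of a convergent series, small once $L$ is large; for the finitely many $|l|\le L$, Cauchy--Schwarz and stationarity give $|\gamma_l^{(d)}|\le E\|R_d(X_0-\mu)\|^2$, which tends to $0$ as $d\to\infty$ by dominated convergence, since $\|R_d(X_0-\mu)\|\to0$ pointwise and is dominated by $\|X_0-\mu\|^2\in L^1$. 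Choosing $L$ first and then letting $d\to\infty$ closes the estimate. I expect the genuine difficulty to lie precisely in establishing the Hilbert-space covariance inequality with a moment factor that is both summable in $l$ and vanishes in $d$; the $\beta$-mixing coupling characterization (replacing the distant block by an independent copy at $L_1$-cost controlled by $\beta$, then Hölder with the $(2+\delta)$-moments) is the mechanism I would rely on. Finally, $\sup_n E\|R_dY_n\|^2\to0$ together with $\sup_n E\|Y_n\|^2<\infty$ yields tightness of $(Y_n)$ through the compactness of Hilbert-space ellipsoids, and tightness combined with the finite-dimensional convergence gives $Y_n\Rightarrow N_1$, establishing (\ref{theo1l1}).
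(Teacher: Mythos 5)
Your proposal is correct and takes essentially the same route as the paper: finite-dimensional convergence via Cram\'er--Wold and the real-valued CLT for 1-approximating functionals, plus a uniform-in-$n$ bound on the orthogonal-complement projections obtained from the Hilbert-space covariance inequality, with the residual moments vanishing as $d\to\infty$ by dominated convergence, and the same dominated-convergence argument for (\ref{theo1l2}). The only cosmetic differences are that the paper phrases the reduction through the Ledoux--Talagrand finite-dimensional approximation criterion (an $L^1$ bound on $S_n-S_{nd}$, passed to $L^2$ by Cauchy--Schwarz) rather than through tightness, and it keeps the projected moment factors inside the covariance bound instead of splitting the covariance sum at a threshold $L$; both are the same mechanism.
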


Chen and White \cite{chen} proved central limit theorems for arrays of dependent (including near epoch dependence) Hilbert space-valued random variables. In the stationary case, their Corollary 3.10 is similar to our Theorem \ref{theo1}. They assume strong mixing, which is more general than absolute regularity, but with a faster rate. We assume 1-approximability, which is more general than $L_2$ near epoch dependence used by Chen and White.

\subsection{\label{sub3} Block Bootstrap for Hilbert Space-Valued Random Variables}

Parametric methods in Hilbert spaces are difficult, because even a normal distribution has an infinite dimensional parameter, which is difficult to estimate, especially under dependence. We will use nonoverlapping block bootstrap introduced by Carlstein \cite{carl} and show its consistency. We will draw blocks of length $p$ with $p=p_n\rightarrow\infty$ as $n\rightarrow\infty$ and $p_n/n\rightarrow 0$. Set $k=[n/p]$ (where $[.]$ denotes the integer part of a real number) and
\begin{align}
I_i&=\left(X_{(i-1)p+1},X_{(i-1)p+2},\ldots,X_{ip}\right)\\
B_i&=\left\{(i-1)p+1,(i-1)p+2,\ldots,ip\right\}.
\end{align}
We produce a new bootstrap sample $X^\star_{1},\ldots,X^\star_{kp}$ choosing $k$ times randomly and independently blocks with
\begin{equation}
P\left((X_{(i-1)p+1}^\star,X_{(i-1)p+2}^\star,\ldots,X_{ip}^\star)=I_j\right)=\frac{1}{k}\ \ \ \text{for} \ \ \ i,j=1,\ldots,k.
\end{equation}
As a bootstrap version of the sample mean we take
\begin{equation}
\bar{X}^\star_{n}:=\frac{1}{kp}\sum_{i=1}^{kp}X_i^\star.
\end{equation}
The randomness of the bootstrap variables $X^\star_{1},\ldots,X^\star_{kp}$ has two sources: The sequence $(X_n)_{n\in\N}$ of random variables and the drawing with replacement of the blocks. After enlarging the probability space, we can assume that these two sources of randomness are defined the same probability space. This allows us to speak about the probability and expectation conditional on $X_1,\ldots,X_n$, which we denote with $P^\star$ and $E^\star$. Note that
\begin{equation}
E^\star\bar{X}^\star_{n}=\frac{1}{kp}\sum_{i=1}^{kp}X_i=:\bar{X}_{kp}.
\end{equation}
In the following theorem we establish the strong consistency of the bootstrap for the sample mean:
\begin{theorem}\label{theo2} Let $(X_n)_{n\in\Z}$ be a 1-approximating functional of a stationary, absolutely regular sequence $(\xi_n)_{n\in\Z}$ and assume that the following conditions hold for some $\delta>0$ and $\delta'\in(0,\delta)$
\begin{enumerate}
\item $E\left\|X_1\right\|^{2+\delta}<\infty$,
\item $\sum_{m=1}^\infty(a_m)^{\delta'/(1+\delta')}<\infty$, $\sum_{m=1}^\infty m^{3/2} a_m<\infty$,
\item $\sum_{m=1}^\infty(\beta_m)^{\delta'/(2+\delta')}<\infty$, $\sum_{m=1}^\infty m\beta_m<\infty$.
\end{enumerate}
Furthermore, let the block length $p$ be nondecreasing, $p\rightarrow\infty$, $p=O(n^{1-c_1})$ for some $c_1>0$ and $p_n=p_{2^l}$ for $n=2^{l-1}+1,\ldots,2^l$. Then almost surely as $n\rightarrow\infty$
\begin{equation}
\sqrt{kp}\left(\bar{X}^\star_{n}-\bar{X}_{kp}\right)\Rightarrow^\star N_1,
\end{equation}
where $N_1$ is a $H$-valued Gaussian random variable with $N(0,V)$ distribution with mean 0 and covariance operator $V$ defined in Theorem \ref{theo1}.
\end{theorem}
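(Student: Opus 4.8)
The plan is to condition on the data and recognise the bootstrapped quantity as a normalised sum of row-wise i.i.d.\ Hilbert-space vectors, then to verify, for almost every realisation of the data, the hypotheses of a triangular-array central limit theorem in $H$. Write $T_i=\sum_{j\in B_i}X_j$ for the $i$-th block sum and let $J_1,\dots,J_k$ be the i.i.d.\ uniform block indices produced by the resampling. Since $\sum_{i=1}^{kp}X_i^\star=\sum_{m=1}^k T_{J_m}$ and $kp\,\bar X_{kp}=\sum_{i=1}^{kp}X_i$, one gets
$$\sqrt{kp}\bigl(\bar X^\star_{n}-\bar X_{kp}\bigr)=\frac{1}{\sqrt{k}}\sum_{m=1}^k Y_{n,m},\qquad Y_{n,m}:=\frac{T_{J_m}-p\,\bar X_{kp}}{\sqrt p}.$$
Conditional on $X_1,\dots,X_n$ the vectors $Y_{n,1},\dots,Y_{n,k}$ are i.i.d., with $E^\star T_{J_m}=\frac1k\sum_{i=1}^kT_i=p\,\bar X_{kp}$, so $E^\star Y_{n,m}=0$. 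Thus $P^\star$-a.s.\ the conditional law is that of a normalised sum of $k$ centred i.i.d.\ $H$-valued summands distributed as the empirical law of $\{(T_i-p\bar X_{kp})/\sqrt p:i=1,\dots,k\}$, and it remains to check the deterministic conditions of an $H$-valued Lindeberg central limit theorem for almost every data realisation.

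For a row-wise i.i.d.\ triangular array in a separable Hilbert space, weak convergence to $N(0,V)$ follows from: (i) convergence of the conditional covariance bilinear form, i.e.\ for all $x,y\in H$,
$$\langle\hat V_n x,y\rangle:=\frac{1}{kp}\sum_{i=1}^k\langle T_i-p\bar X_{kp},x\rangle\,\langle T_i-p\bar X_{kp},y\rangle\longrightarrow\langle Vx,y\rangle;$$
(ii) trace convergence $E^\star\|Y_{n,1}\|^2=\frac{1}{kp}\sum_{i=1}^k\|T_i-p\bar X_{kp}\|^2\to\operatorname{tr}(V)$, which together with (i) makes the eigen-tails $\sum_{j>J}E^\star\langle Y_{n,1},e_j\rangle^2$ converge to $\sum_{j>J}\langle Ve_j,e_j\rangle$ and hence be uniformly small, yielding tightness; and (iii) a Lindeberg condition $E^\star[\|Y_{n,1}\|^2\mathbf 1\{\|Y_{n,1}\|>\varepsilon\sqrt k\}]\to 0$. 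I will verify (i)--(iii) almost surely. Condition (iii) is the mildest: using $E\|X_1\|^{2+\delta}<\infty$ and a Rosenthal-type moment inequality for sums of $1$-approximating functionals of absolutely regular sequences, the block sums satisfy $E\|T_i-p\mu\|^{2+\delta}=O(p^{1+\delta/2})$, so the conditional $(2+\delta)$-th moment of $Y_{n,1}$ stays bounded; since $\sqrt k\to\infty$, a Markov estimate makes the truncated second moment vanish.

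The genuine work lies in (i) and especially (ii), which are strong laws for the empirical block-covariance and the empirical block-norm; the main obstacle is (ii), as it involves the full $H$-norm of the block sums rather than a single bilinear form. I expect to treat both as follows. First, by Theorem~\ref{theo1} applied to the partial sum of length $p$ (in particular the second-moment convergence \eqref{theo1l2}) together with stationarity, the expectations converge: $\tfrac1p E\langle S_1,x\rangle\langle S_1,y\rangle\to\langle Vx,y\rangle$ and $\tfrac1p E\|S_1\|^2\to\operatorname{tr}(V)$, where $S_i:=\sum_{j\in B_i}(X_j-\mu)$. Second, the empirical averages over the $k$ blocks must concentrate around these expectations. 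Since $\langle S_i,x\rangle\langle S_i,y\rangle$ and $\|S_i\|^2$ are themselves $1$-approximating functionals of the underlying absolutely regular sequence, a variance bound via the weak-dependence moment inequalities gives a bound of order $O(1/k)$ for the variance of the empirical average; this is exactly where the stronger summability hypotheses $\sum_m m^{3/2}a_m<\infty$ and $\sum_m m\beta_m<\infty$ are needed, to make the covariance sums across blocks converge. Replacing $S_i$ by $T_i-p\bar X_{kp}$ is harmless because $\bar X_{kp}\to\mu$ almost surely.

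Finally, to pass from concentration to the required almost-sure convergence along the full sequence, I will use the dyadic blocking device in the hypotheses. Because $p=O(n^{1-c_1})$ forces $k\gtrsim n^{c_1}$, the $O(1/k)$ variance bound along $n=2^l$ is summable, $\sum_l P(|\langle\hat V_{2^l}x,y\rangle-\langle Vx,y\rangle|>\varepsilon)<\infty$, so Borel--Cantelli yields a.s.\ convergence along $2^l$ (and likewise for the trace). The assumption $p_n=p_{2^l}$ keeps $p$ constant on each dyadic block, so $k$ only varies within a bounded factor there, and monotonicity of the partial sums $\sum_{i=1}^k\|S_i\|^2$ and $\sum_{i=1}^k\langle S_i,x\rangle^2$ (reducing general bilinear forms to nonnegative quadratic forms by polarisation) controls the fluctuations between $2^{l-1}$ and $2^l$, upgrading convergence to the entire sequence. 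Having verified (i)--(iii) almost surely, the $H$-valued Lindeberg central limit theorem gives the conditional weak convergence $\sqrt{kp}(\bar X^\star_{n}-\bar X_{kp})\Rightarrow^\star N_1$ almost surely.
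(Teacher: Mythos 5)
Your reduction of the bootstrap statistic to a conditionally i.i.d.\ array of normalized block sums is exactly the paper's starting point, and your use of Theorem~\ref{theo1} for the convergence of expectations, together with dyadic blocking and Borel--Cantelli for the almost sure statements, also matches the paper's strategy. However, the two steps you yourself flag as ``the genuine work'' both contain real gaps, and they sit precisely where the paper spends its technical effort.

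First, your strong laws (i) and (ii) rest on ``a variance bound \dots\ of order $O(1/k)$ for the variance of the empirical average'' of $\frac1p\|S_i\|^2$ and $\frac1p\langle S_i,x\rangle\langle S_i,y\rangle$. The variance of these quantities involves \emph{fourth} moments of the block sums, hence fourth moments of $\|X_j\|$, while the theorem assumes only $E\|X_1\|^{2+\delta}<\infty$ with $\delta$ possibly much smaller than $2$: these variances can be infinite, and no weak-dependence covariance inequality can repair that. This is exactly why the paper's proof truncates. It introduces the Lipschitz truncation $\Phi_K$, lets the truncation level $K_l=2^{\eta l}$ grow along the dyadic blocks with $\eta<c_1/4$, applies its fourth-moment bound for \emph{bounded} $1$-approximating sequences (Lemma~\ref{lem7}) only to the truncated variables, and runs a separate negligibility argument for the truncation error using Lemma~\ref{lem6} and the gap $\delta'<\delta$ in the hypotheses. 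Without some such device, your steps (i) and (ii) do not go through as stated.

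Second, your Lindeberg condition (iii) is not the mildest step. It invokes a Rosenthal-type inequality $E\|T_i-p\mu\|^{2+\delta}=O(p^{1+\delta/2})$ for Hilbert-space-valued $1$-approximating functionals, which is neither proved in the paper nor available off the shelf under these hypotheses; and even granting it, Markov's inequality controls the conditional truncated second moment only in expectation, whereas you need it to vanish almost surely --- yet another strong law, this time for the array $\|T_i-p\bar X_{kp}\|^{2+\delta}/p^{1+\delta/2}$, whose summands have only bounded first moments. The paper avoids the Lindeberg condition altogether by working with Mallow's metric $m$ and the Bickel--Freedman facts that $m$ contracts under i.i.d.\ averaging, $m^2\big(\frac{1}{\sqrt k}\sum_{i=1}^k S^\star_{ni},\frac{1}{\sqrt k}\sum_{i=1}^k N_i\big)\le m^2(S^\star_{n1},N_1)$, and that convergence in $m$ is equivalent to weak convergence plus convergence of second moments. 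Consequently only two things need checking: the conditional law of a \emph{single} normalized block converges weakly to $N(0,V)$ (tested against a countable Varadarajan family of bounded uniformly continuous functions, approximated by Lipschitz ones via Lemma~\ref{lem8}, so no moments beyond the second enter), and the conditional second moment converges (the truncation argument above). If you want to salvage your route, replace (iii) by this Mallow's metric reduction and import the truncation scheme into (i) and (ii); as written, the proposal is not a proof.
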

With $\Rightarrow^\star$, we denote the weak convergence of the conditional distribution given $X_1,\ldots,X_n$. Applied to the Hilbert space $H=\R$, this theorem improves existing results for the bootstrap of real valued near epoch dependent sequences. Sharipov and Wendler \cite{shar} proved the almost sure bootstrap consistency under $(4+\delta)$ moments. Gon\c{c}alves and de Jong \cite{gonc} and Calhoun \cite{calh} assumed $(2+\delta)$ moments, but showed only convergence in probability of the bootstrap distribution estimator.

The bootstrap might be used to test the hypothesis that the expected values of functional data in two different populations are identical. Let $X_1,\ldots,X_n$ and $Y_1,\ldots,Y_n$ be two independent samples (which might show dependence within each sample). By comparing the difference $\|\bar{X}-{\bar{Y}}\|$ of the sample means to the $(1-\alpha)$ quantile of its bootstrapped counterpart $\|(\bar{X}^\star-E^\star\bar{X}^\star)-({\bar{Y}}^\star-E^\star{\bar{Y}}^\star)\|$, we obtain a test that has asymptotically level $\alpha$. This gives an alternative to the classical Hotelling test, which is not well suited for the high dimensional setup, see Marozzi \cite{maro}.

\subsection{\label{sub4} Application to von Mises Statistics}

Methods for Hilbert space valued random variables might also help to analyze nonlinear statistics of real valued data. We first treat the Cram\'er-von Mises-statistic, we will treat general von Mises-statistics later. Let $(X_n)_{n\in\N}$ be a real-valued, stationary, 1-approximating sequence of random variables. To test if the distribution function of $X_i$ equals $F$, one can use the following test statistic
\begin{equation}
V_n:=\int_{\R}\left(F_n(t)-F(t)\right)^2w(t)dt,
\end{equation}
where $F_n(t):=1/n\sum_{i=1}^n\mathds{1}_{\{X_i\leq t\}}$ is the empirical distribution function and $w$ is a positive, bounded weight function with $\int w(t)dt<\infty$. A typical choice for the weight function is the density $f$ under the hypothesis, so that the Cram\'er-von Mises-statistic can be written as $V_n=\int\left(F_n(t)-F(t)\right)^2dF$. Another common choice is $w(t)=[F(t)(1-F(t))]^{-1}$ (Anderson-Darling-test), which places more weight on the tails of the distribution. But this is not covered by our assumptions, because $w$ is unbounded. Let $H$ be the Hilbert space of measurable functions $f$ with $\langle f,f\rangle <\infty$ for the inner product given by
\begin{equation}
\langle f,g\rangle:=\int_{\R}f(t)g(t)w(t)dt.
\end{equation}
Then we have
\begin{equation}
V_n=\left\|F_n-F\right\|^2
\end{equation}
and $F_n$ can be regarded as a sample mean of the $H$-valued random variables $\left(\mathds{1}_{\{X_n\leq \cdot\}}\right)_{n\in\N}$. By the boundedness of $w$, the mapping $x\mapsto \mathds{1}_{\{x\leq \cdot\}}$ is Lipschitz-continuous and so this sequence is also a 1-approximating functional. If the mixing and approximation conditions of Theorem \ref{theo2} hold, we have that
\begin{equation}
\sqrt{n}\left(F_n-F\right)\ \ \ \text{and} \ \ \ \sqrt{pk}\left(F_n^\star-F_{pk}\right)
\end{equation}
with $F_n^\star(t)=1/(pk)\sum_{i=1}^{pk}\mathds{1}_{\{X_i^\star\leq t\}}$ converge almost surely to the same limit distribution in $H$. As the squared norm is a continuous mapping, the limit distributions of $nV_n$ and
\begin{equation}
{pk}V_n^\star:=\int_{\R}\left(\sqrt{pk}(F_n^\star(t)-F_{pk}(t))^2\right)w(t)dt
\end{equation} 
are almost surely the same, so that we can use bootstrap to derive confidence regions and critical values for tests.

Now we will consider general bivariate and degenerate von Mises-statistics ($V$-statistics). Let $h:\R^2\rightarrow\R$ be a symmetric, measurable function. We call
\begin{equation}
V_n:=\frac{1}{n^2}\sum_{i,j=1}^n h(X_i,X_j)
\end{equation}
$V$-statistic with kernel $h$. The kernel and the related $V$-statistic are called degenerate, if $E(h(x,X_i))=0$ for all $x\in\R$. Furthermore, we assume that $h$ is Lipschitz-continuous and positive definite, i.e.
\begin{equation}
\sum_{i,j=1}^m c_ic_jh(x_i,x_j)\geq 0
\end{equation}
for all $c_1,\ldots,c_n,x_1,\ldots,x_n\in\R$. If additionally $Eh(X_0,X_0)<\infty$, then by Sun's version of Mercers theorem \cite{sun} (see also Leucht and Neumann \cite{leu3}), we have under these conditions a representation
\begin{equation}
h(x,y)=\sum_{l=1}^\infty \lambda_l \Phi_l(x)\Phi_l(y)
\end{equation}
for orthonormal eigenfunctions $(\Phi_l)_{l\in\N}$ with the following properties
\begin{itemize}
\item $E\left(h(x,X_0)\Phi_l(X_0)\right)=\lambda_l\Phi_l(x)$
\item $E\Phi_l(X_0)=0$ for all $l\in\N$,
\item $E\Phi_l^2(X_0)=1$ for all $l\in\N$,
\item $E\Phi_{l_1}(X_0)\Phi_{l_2}(X_0)=0$ for all $l_1\neq l_2$,
\item $\lambda_l\geq 0$ for all $l\in\N$,
\item $\sum_{l=1}^\infty \lambda_l<\infty$.
\end{itemize}
We can treat such $V$-statistics in the setting of Hilbert spaces. Let $H$ be the  Hilbert space of real-valued sequences $y=(y_l)_{l\in\N}$ that satisfy $\sum_{l=1}^\infty \lambda_l y_l^2<\infty$ equipped with the inner product
\begin{equation}\label{product}
\langle y,z\rangle:=\sum_{l=1}^\infty \lambda_l y_l z_l.
\end{equation}
We consider the $H$-valued sequence of random variables $\left((\Phi_l(X_n))_{l\in\N}\right)_{n\in \N}$ and observe that
\begin{equation}
V_n=\frac{1}{n^2}\sum_{i,j=1}^n\sum_{l=1}^\infty \lambda_l \Phi_l(X_i)\Phi_l(X_j)=\sum_{l=1}^\infty\lambda_l\bigg(\frac{1}{n}\sum_{i=1}^n\Phi_l(X_i)\bigg)^2=\left\|\frac{1}{n}\sum_{i=1}^n\big(\Phi_l(X_i)\big)_{l\in\N}\right\|^2.
\end{equation}
If the conditions of Theorems \ref{theo1} and \ref{theo2} hold, we can conclude by the continuous mapping theorem that $nV_n$ and its bootstrap version ${pk}V_n^\star$ have the same limit distribution, where the bootstrap version is given by the squared norm of
\begin{equation}
\frac{1}{pk}\sum_{i=1}^{pk}\left(\Phi_l(X_i)\right)^\star_{l\in\N}-E^\star\left[\left(\Phi_l(X_1)\right)^\star_{l\in\N}\right].
\end{equation}
It is clear that drawing blocks in the Hilbert space $H$ and in $\R$ gives the same result, that is $\left(\Phi_l(X_i)\right)^\star_{l\in\N}=\left(\Phi_l(X_i^\star)\right)_{l\in\N}$, so we can write the bootstrapped $V$-statistic as
\begin{multline}
V_n^\star=\sum_{l=1}^\infty\lambda_l\left(\frac{1}{pk}\sum_{i=1}^{pk}\Phi_l(X_i^\star)-\frac{1}{pk}\sum_{i=1}^{pk}\Phi_l(X_i)\right)^2\\
=\frac{1}{(pk)^2}\sum_{i,j=1}^{pk}h(X_i^\star,X_j^\star)-\frac{2}{(pk)^2}\sum_{i,j=1}^{pk}h(X_i^\star,X_j)+\frac{1}{(pk)^2}\sum_{i,j=1}^{pk}h(X_i,X_j).
\end{multline}
As we see in the last line, we do not have to know the eigenvalues $(\lambda_l)_{l\in\N}$ and eigenfunctions $(\Phi_l)_{l\in\N}$ to calculate the bootstrap version $V_n^\star$. Note that this procedure give the distribution of a degenerate $V$-statistics even if the original kernel is not degenerate. For the bootstrap of a nondegenerate $V$-statistic under dependence (respectively the related $U$-statistic), see Dehling and Wendler \cite{dehl} and Sharipov and Wendler \cite{shar}.

We will now give precise conditions for the bootstrap to hold:
\begin{theorem}\label{theo3}
Let $(X_n)_{n\in\Z}$ be a 1-approximating functional of a stationary, absolutely regular sequence $(\xi_n)_{n\in\Z}$, and let $h$ be a Lipschitz-continuous and positive definite kernel function such that for some $\delta>0$ and $\delta'\in(0,\delta)$
\begin{enumerate}
\item $E\left|h(X_0,X_0)\right|^{1+\delta}<\infty$,
\item $\sum_{m=1}^\infty(a_m)^{\delta'/(1+2\delta')}<\infty$, $\sum_{m=1}^\infty m^{3/2} \sqrt{a_m}<\infty$,
\item $\sum_{m=1}^\infty(\beta_m)^{\delta'/(1+\delta')}<\infty$, $\sum_{m=1}^\infty m\beta_m<\infty$.
\end{enumerate}
Furthermore, let the block length $p$ be nondecreasing, $p\rightarrow\infty$, $p=O(n^{1-c_1})$ for some $c_1>0$ and $p_n=p_{2^l}$ for $n=2^{l-1}+1,\ldots,2^l$. Then almost surely $nV$ and $pkV_n^\star$ converge to the same limit in distribution.
\end{theorem}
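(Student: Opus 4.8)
The plan is to interpret the degenerate $V$-statistic as the squared norm of a sample mean of centered Hilbert space-valued random variables and then to invoke Theorems~\ref{theo1} and~\ref{theo2} together with the continuous mapping theorem. Set $Y(x):=\left(\Phi_l(x)\right)_{l\in\N}$ and $Y_n:=Y(X_n)$, viewed as elements of the weighted sequence space $H$ with inner product $\langle y,z\rangle=\sum_{l=1}^\infty\lambda_l y_l z_l$. Mercer's representation gives the identity
\[
\langle Y(x),Y(y)\rangle=\sum_{l=1}^\infty\lambda_l\Phi_l(x)\Phi_l(y)=h(x,y),
\]
so in particular $\|Y_n\|^2=h(X_n,X_n)$, which together with condition~1 shows that $Y_n$ is a genuine $H$-valued random variable; moreover, since the kernel is degenerate we have $E\Phi_l(X_0)=0$ for all $l$, hence $(Y_n)_{n\in\Z}$ has mean $\mu=0$. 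With this notation $nV_n=\big\|\tfrac{1}{\sqrt n}\sum_{i=1}^n Y_i\big\|^2$, while the computation preceding the theorem exhibits $pkV_n^\star$ as the squared norm of $\sqrt{pk}$ times the recentered bootstrap mean of $(Y_n)$. Thus it suffices to prove the central limit theorem and the bootstrap consistency for the partial sums of $(Y_n)$ and then to apply the continuous map $y\mapsto\|y\|^2$.

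First I would verify that $(Y_n)$ is a 1-approximating functional of $(\xi_n)$. Let $f_m^X$ denote the approximating functions for $(X_n)$, so that $E|X_0-f_m^X(\xi_{-m},\ldots,\xi_m)|\le a_m$, and take $Y\circ f_m^X$ as approximating function for $Y_0$. From the identity above,
\[
\|Y(x)-Y(y)\|^2=h(x,x)-2h(x,y)+h(y,y)=\big(h(x,x)-h(x,y)\big)+\big(h(y,y)-h(y,x)\big),
\]
and Lipschitz continuity of $h$ bounds each summand by $L|x-y|$, whence $\|Y(x)-Y(y)\|\le\sqrt{2L}\,|x-y|^{1/2}$. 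By Jensen's inequality the approximation rate for $(Y_n)$ is therefore
\[
\tilde a_m:=E\big\|Y_0-Y(f_m^X(\xi_{-m},\ldots,\xi_m))\big\|\le\sqrt{2L}\,\big(E|X_0-f_m^X(\xi_{-m},\ldots,\xi_m)|\big)^{1/2}\le\sqrt{2L}\,\sqrt{a_m}.
\]
This square-root loss is precisely what shapes the hypotheses of the theorem.

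It remains to check that the conditions of Theorem~\ref{theo2} hold for $(Y_n)$ with the parameters $2\delta$ and $2\delta'$ in place of $\delta$ and $\delta'$ (note $2\delta'\in(0,2\delta)$). The moment condition reads $E\|Y_1\|^{2+2\delta}=E\,h(X_1,X_1)^{1+\delta}<\infty$, which is condition~1. Since $\tilde a_m\le\sqrt{2L}\sqrt{a_m}$, the approximation sum becomes $\sum_m(\tilde a_m)^{2\delta'/(1+2\delta')}\lesssim\sum_m(a_m)^{\delta'/(1+2\delta')}<\infty$ and $\sum_m m^{3/2}\tilde a_m\lesssim\sum_m m^{3/2}\sqrt{a_m}<\infty$, matching condition~2; likewise the mixing sum becomes $\sum_m(\beta_m)^{2\delta'/(2+2\delta')}=\sum_m(\beta_m)^{\delta'/(1+\delta')}<\infty$, together with $\sum_m m\beta_m<\infty$, matching condition~3. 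As these assumptions in particular imply those of Theorem~\ref{theo1} (with exponent $2\delta'$), we obtain $\frac{1}{\sqrt n}\sum_{i=1}^n Y_i\Rightarrow N_1$ and, almost surely, that $\sqrt{pk}$ times the recentered bootstrap mean converges conditionally to $N_1$. Applying the continuous mapping theorem to $y\mapsto\|y\|^2$ (conditionally, for almost every realization, in the bootstrap case) shows that both $nV_n$ and $pkV_n^\star$ converge in distribution to $\|N_1\|^2$, the desired common limit.

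The main work, and the point requiring care, is the verification that $(Y_n)$ is 1-approximating: the Mercer identity must be used to convert the $L_1$-approximation of the real sequence into an $H$-valued one, and the resulting rate $\tilde a_m\asymp\sqrt{a_m}$ is exactly what forces the extra factors appearing in conditions~1--3 of the theorem. Once this is in place, the bookkeeping showing that the substitution $(\delta,\delta')\mapsto(2\delta,2\delta')$ carries the hypotheses of Theorem~\ref{theo3} precisely onto those of Theorems~\ref{theo1} and~\ref{theo2}, as well as the conditional application of the continuous mapping theorem, is routine.
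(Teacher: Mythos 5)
Your proposal is correct and follows essentially the same route as the paper: the paper proves Theorem~\ref{theo3} as a direct consequence of Theorem~\ref{theo2} together with Lemma~\ref{lem1a} (the Mercer identity makes $x\mapsto(\Phi_l(x))_{l\in\N}$ $1/2$-H\"older) and Lemma~\ref{lem1b} (H\"older composition turns the approximation constants into $L\sqrt{a_m}$), noting $E\left\|(\Phi_l(X_i))_{l\in\N}\right\|^{2+2\delta}=E|h(X_i,X_i)|^{1+\delta}<\infty$ and then applying the continuous mapping theorem, exactly as you do. Your inline derivations of the H\"older bound and of the rate $\tilde a_m\leq\sqrt{2L}\sqrt{a_m}$, and the substitution $(\delta,\delta')\mapsto(2\delta,2\delta')$, reproduce precisely these lemmas and the paper's bookkeeping.
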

This is a direct consequence of Theorem \ref{theo2} and Lemmas \ref{lem1a} and \ref{lem1b} below, keeping in mind that $E\left\|(\Phi_l(X_i))_{l\in\N}\right\|^{2+2\delta}=E|h(X_i,X_i)|^{1+\delta}<\infty$. It is easy to see that the Cram\'er-von Mises-statistic is an example of a $V$-statistic satisfying the conditions of Theorem \ref{theo3}. Other examples include the $\mathcal{X}^2$-test statistic for the hypothesis of a given distribution with a finite support.

Leucht and Neumann \cite{leu3} proved a similar theorem for the dependent wild bootstrap, which works the following way: Let $(W_{i,n})_{1\leq n, n\in\N}$ be a weakly dependent, rowwise stationary triangular array of centered, unit variance multipliers, such that the autocorrelation $E(W_{i,n}W_{i+k,n})$ tends to 1 as $n\rightarrow\infty$. As a bootstrap version of a $V$-statistic, they consider
\begin{equation*}
 \tilde{V}:=\sum_{i,j=1}^{n}h(X_i,X_j)(W_{i,n}-\bar{W})(W_{j,n}-\bar{W}).
\end{equation*}
Not only their bootstrap method is different, they also assumed a different form of dependence ($\tau$-dependence instead of 1-approximating functionals) and used different techniques for their proofs.

$U$-statistics $U_n$ are defined similar as $V$-statistics:
\begin{equation}
U_n:=\frac{2}{n(n-1)}\sum_{1\leq i<j\leq n}h(X_i,X_j).
\end{equation}
A short calculation gives $U_n=\frac{n}{(n-1)}V_n-\frac{1}{n(n-1)}\sum_{i=1}^nh(X_i,X_i)$, so it follows that the $U$-statistic and its bootstrap version have the same limit as the $V$-statistic with the same kernel $h$.

\section{\label{sec2} Preliminary Results}

\begin{lemma}\label{lem1a} Let $h$ be a Lipschitz-continuous kernel with constant $L$ and with representation
\begin{equation}
h(x,y)=\sum_{l=1}^\infty \lambda_l \Phi_l(x)\Phi_l(y).
\end{equation}
Then the mapping $x\rightarrow (\Phi_l(x))_{l\in\N}$ into the Hilbert space of sequences equipped with the inner product (\ref{product}) is $1/2$-H\"older-continuous.
\end{lemma}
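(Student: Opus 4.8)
The plan is to compute the squared $H$-norm of the increment $(\Phi_l(x))_{l\in\N}-(\Phi_l(x'))_{l\in\N}$ straight from the definition of the inner product and then to identify it with the kernel $h$ itself. By (\ref{product}),
\begin{equation*}
\big\|(\Phi_l(x))_{l\in\N}-(\Phi_l(x'))_{l\in\N}\big\|^2=\sum_{l=1}^\infty\lambda_l\big(\Phi_l(x)-\Phi_l(x')\big)^2.
\end{equation*}
First I would expand the square and split the sum into three pieces. Each piece converges, since $(\Phi_l(x))_{l\in\N}\in H$ for every $x$ means $\sum_l\lambda_l\Phi_l(x)^2<\infty$; and by the assumed representation $h(u,v)=\sum_l\lambda_l\Phi_l(u)\Phi_l(v)$ the three series equal $h(x,x)$, $h(x,x')$ and $h(x',x')$. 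Hence the squared norm equals $h(x,x)-2h(x,x')+h(x',x')$.

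The substance of the argument is then to bound this combination of kernel values by $|x-x'|$. Using the symmetry $h(x,x')=h(x',x)$, I would telescope
\begin{equation*}
h(x,x)-2h(x,x')+h(x',x')=\big(h(x,x)-h(x,x')\big)+\big(h(x',x')-h(x',x)\big),
\end{equation*}
so that in each bracket only the second argument changes, from $x'$ to $x$, while the first argument is held fixed. The Lipschitz continuity of $h$ with constant $L$ then bounds each bracket in absolute value by $L|x-x'|$, so that
\begin{equation*}
\big\|(\Phi_l(x))_{l\in\N}-(\Phi_l(x'))_{l\in\N}\big\|^2\leq 2L\,|x-x'|.
\end{equation*}
Taking square roots gives $\big\|(\Phi_l(x))_{l\in\N}-(\Phi_l(x'))_{l\in\N}\big\|\leq\sqrt{2L}\,|x-x'|^{1/2}$, which is exactly $1/2$-H\"older-continuity with constant $\sqrt{2L}$.

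The only delicate point is the legitimacy of manipulating the infinite series, and this is precisely where the representation hypothesis is used: term-by-term splitting of the expanded sum is justified because each of the three resulting series has a limit identified through $h$, and the diagonal values $h(x,x)=\sum_l\lambda_l\Phi_l(x)^2$ are finite by the standing assumption that the map takes values in $H$. Once the squared norm is identified with $h(x,x)-2h(x,x')+h(x',x')$, the remainder is the elementary telescoping estimate above, so I do not expect any further obstacle.
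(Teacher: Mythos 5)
Your proposal is correct and follows essentially the same route as the paper: expand the squared norm of the increment via the inner product (\ref{product}), identify the resulting series with $h(x,x)-h(x,x')-h(x',x)+h(x',x')$, and bound this by $2L|x-x'|$ using the Lipschitz property, which gives $1/2$-H\"older continuity with constant $\sqrt{2L}$ after taking square roots. The only difference is that you spell out the telescoping step and the convergence of the split series, which the paper leaves implicit.
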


\begin{proof}
Recall that $h(x,y)=\langle(\Phi_l(x))_{l\in\N},(\Phi_l(y))_{l\in\N}\rangle$. The following short calculation leads to the statement of the lemma:
\begin{multline}
\left\|(\Phi_l(x))_{l\in\N}-(\Phi_l(y))_{l\in\N}\right\|^2=\langle(\Phi_l(x))_{l\in\N}-(\Phi_l(y))_{l\in\N},(\Phi_l(x))_{l\in\N}-(\Phi_l(y))_{l\in\N}\rangle\\
=\sum_{l=1}^\infty \lambda_l \Phi_l(x)\Phi_l(x)-\sum_{l=1}^\infty \lambda_l \Phi_l(x)\Phi_l(y)-\sum_{l=1}^\infty \lambda_l \Phi_l(y)\Phi_l(x)+\sum_{l=1}^\infty \lambda_l \Phi_l(y)\Phi_l(y)\\
=h(x,x)-h(x,y)-h(y,x)+h(y,y)\leq 2L|x-y|.
\end{multline}
\end{proof}

\begin{lemma}\label{lem1b} Let $H$ and $H'$ be Hilbert spaces and $(X_n)_{n\in\Z}$ be a 1-approximating functional with approximation constants $(a_{m})_{m\in\N}$ and $g:H\rightarrow H'$ be a $\alpha$-H\"older-continuous function with constant $L$. Then $(g(X_n))_{n\in\Z}$ is a 1-approximating functional with approximation constants $(L(a_{m})^\alpha)_{m\in\N}$.
\end{lemma}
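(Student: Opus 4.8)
The plan is to exhibit explicit approximating functions for the transformed sequence by composing $g$ with the approximating functions of the original sequence. Since $(X_n)_{n\in\Z}$ is 1-approximating with constants $(a_m)_{m\in\N}$, there exist measurable $f_m:S^{2m+1}\to H$ with $E\|X_0-f_m(\xi_{-m},\ldots,\xi_m)\|\leq a_m$. I would take as candidate approximating functions for $(g(X_n))_{n\in\Z}$ the compositions $g_m:=g\circ f_m$, which map $S^{2m+1}\to H'$ and are measurable because $g$ is continuous (H\"older continuity implies continuity) and $f_m$ is measurable.

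The key estimate then follows in two short steps. First, applying the $\alpha$-H\"older bound pointwise with arguments $X_0$ and $f_m(\xi_{-m},\ldots,\xi_m)$ gives
\begin{equation*}
E\left\|g(X_0)-g_m(\xi_{-m},\ldots,\xi_m)\right\|_{H'}\leq L\, E\left[\left\|X_0-f_m(\xi_{-m},\ldots,\xi_m)\right\|_H^\alpha\right].
\end{equation*}
Second, since the H\"older exponent satisfies $\alpha\in(0,1]$, the map $z\mapsto z^\alpha$ is concave on $[0,\infty)$, so Jensen's inequality yields $E[Z^\alpha]\leq (E Z)^\alpha$ for the nonnegative random variable $Z=\|X_0-f_m(\xi_{-m},\ldots,\xi_m)\|_H$. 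Combining the two steps with the defining 1-approximating bound gives
\begin{equation*}
E\left\|g(X_0)-g_m(\xi_{-m},\ldots,\xi_m)\right\|_{H'}\leq L\,(E Z)^\alpha\leq L\,(a_m)^\alpha,
\end{equation*}
which is exactly the claimed approximation constant. Finally $L(a_m)^\alpha\to 0$ because $a_m\to 0$, so $(g(X_n))_{n\in\Z}$ is indeed a 1-approximating functional.

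There is no serious obstacle here; the only point requiring care is the reduction $E[Z^\alpha]\leq (EZ)^\alpha$, which is precisely where the restriction $\alpha\leq 1$ built into the H\"older assumption is used---for $\alpha>1$ the concavity argument fails (and in fact $\alpha$-H\"older continuity with $\alpha>1$ would force $g$ to be locally constant). It is worth emphasizing that the argument uses only $L_1$-convergence of the approximants, in keeping with the 1-approximating framework rather than the $L_2$ near epoch dependence alternative, and that no additional moment assumptions on $X_0$ enter.
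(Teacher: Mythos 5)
Your proposal is correct and follows essentially the same route as the paper: both compose the approximating functions with $g$ (taking $g\circ f_m$), apply the $\alpha$-H\"older bound, and then use the concavity inequality $E[Z^\alpha]\leq (EZ)^\alpha$ to reach $L(a_m)^\alpha$. Your write-up merely makes explicit the Jensen's-inequality justification and the measurability of $g\circ f_m$, which the paper leaves implicit.
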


\begin{proof} Let $f_m$ be functions such that
\begin{equation}
E\left\|X_0-f_m(\xi_{-m},\ldots,\xi_m)\right\|\leq a_m.
\end{equation}
Then
\begin{multline}
E\left\|g(X_0)-g(f_m(\xi_{-m},\ldots,\xi_m))\right\|\leq EL\left\|X_0-f_m(\xi_{-m},\ldots,\xi_m)\right\|^\alpha\\
\leq L \left(E\left\|X_0-f_m(\xi_{-m},\ldots,\xi_m)\right\|\right)^\alpha\leq L(a_{m})^\alpha.
\end{multline}
So the condition in (\ref{line3}) is satisfied with approximating functions $g\circ f_m$ and approximation constants $(L(a_{m})^\alpha)_{m\in\N}$.
\end{proof}

\begin{lemma}\label{lem1c} Let $(X_n)_{n\in\Z}$ be a stationary, 1-approximating functional with constants $(a_m)_{m\in\N}$ of an absolutely regular process $(\xi_n)_{n\in\Z}$ with mixing coefficients $(\beta_m)_{m\in\N}$. Then for $k\in\N$, there exist $H$-valued radom sequences $(X'_n)_{n\in\Z}$ and $(X''_n)_{n\in\Z}$ with the same distribution as $(X_n)_{n\in\Z}$ and a set $A$ with $P(A)\geq 1-\beta_{\lfloor \frac{k}{3}\rfloor}$, such that
\begin{itemize}
\item $(X''_n)_{n\in\Z}$ is independent of $(X_n)_{n\in\Z}$,
\item $E\left[\left\|X_i-X_i'\right\|\mathds{1}_A\right]\leq 2a_{i-\lfloor \frac{2k}{3}\rfloor}$ for all $i\geq k$,
\item $E\left[\left\|X'_i-X_i''\right\|\right]\leq 2a_{i+\lfloor \frac{k}{3}\rfloor}$ for all $i\geq 0$.
\end{itemize}
\end{lemma}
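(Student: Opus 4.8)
The engine of the proof will be Berbee's coupling lemma for absolutely regular sequences, combined with the 1-approximating (near epoch dependence) hypothesis. Berbee's lemma lets us construct, after enlarging the probability space, a copy of the process that is independent of the conditioning past $\mathcal{F}_{-\infty}^{0}$ and that coincides with the original future on an event $A$ of probability at least $1-\beta_{\lfloor k/3\rfloor}$; the separation $\lfloor k/3\rfloor$ between past and future is exactly what produces the coefficient $\beta_{\lfloor k/3\rfloor}$ appearing in the statement. The role of the 1-approximating property (\ref{line3}) is then purely to transfer this coincidence on the level of the underlying sequence $(\xi_n)$ to the $H$-valued variables $(X_n)$, via their finite-memory approximants $f_m$.

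Writing $q=\lfloor k/3\rfloor$, the plan is to apply Berbee's lemma to the pair formed by the past $(\xi_j)_{j\le 0}$ and the future $(\xi_j)_{j\ge q}$, whose absolute-regularity coefficient is $\beta_q$. This yields an independent copy from which I build $(X''_n)$, distributed as $(X_n)$ and independent of the generating past, together with the coincidence set $A$. I would then define the intermediate sequence $(X'_n)$ as the hybrid that carries the coupled/independent randomness on the present-and-past window but the original randomness on the far future, arranged so that $(X'_n)\stackrel{d}{=}(X_n)$. The two buffer widths $\lfloor k/3\rfloor$ and $\lfloor 2k/3\rfloor$ are inserted precisely so that the later truncation windows fall strictly inside the coupled region.

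For the first bound I would insert the NED approximant $Y_i=f_{\,i-\lfloor 2k/3\rfloor}(\xi_{\lfloor 2k/3\rfloor},\dots,\xi_{2i-\lfloor 2k/3\rfloor})$ together with its analogue $Y'_i$ built from the coupled sequence. The hypothesis $i\ge k$ guarantees that the window $[\lfloor 2k/3\rfloor,\,2i-\lfloor 2k/3\rfloor]$ lies inside $[\lfloor k/3\rfloor,\infty)$, so that on $A$ the arguments of $Y_i$ and $Y'_i$ agree and hence $Y_i=Y'_i$ there. The triangle inequality then gives $E[\|X_i-X'_i\|\mathds{1}_A]\le E\|X_i-Y_i\|+E[\|Y'_i-X'_i\|\mathds{1}_A]\le 2a_{i-\lfloor 2k/3\rfloor}$, the two equal terms coming from applying (\ref{line3}) once to $X$ and once to its equidistributed copy $X'$. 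The third bound is entirely analogous, now with truncation radius $i+\lfloor k/3\rfloor$: the windows $[-\lfloor k/3\rfloor,\,2i+\lfloor k/3\rfloor]$ lie in the region where $X'$ and $X''$ share the same randomness, so the two truncations coincide and two applications of (\ref{line3}) yield $2a_{i+\lfloor k/3\rfloor}$.

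The hard part will be organizing a \emph{single} coupling that simultaneously (i) preserves the marginal law of each of $(X'_n)$ and $(X''_n)$, (ii) produces the required independence, and (iii) places both families of truncation windows inside the coupled region. The subtlety is that the approximating maps $f_m$ are not assumed Lipschitz, so approximate agreement of the $\xi_j$ is useless; what is needed is \emph{exact} coincidence of the underlying variables on the relevant windows. This is exactly what Berbee's coupling supplies on $A$, and it is what forces the two distinct buffer widths: $\lfloor k/3\rfloor$ keeps the $X'$-versus-$X''$ windows in the shared region, while $\lfloor 2k/3\rfloor$ keeps the $X$-versus-$X'$ windows, for $i\ge k$, inside the part of the future where the coupling with the original sequence is in force.
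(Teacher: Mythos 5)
The paper does not prove this lemma at all: it is quoted as Proposition 2.16 of Borovkova, Burton and Dehling \cite{boro}, so your attempt can only be compared with the argument in that source, and your overall strategy --- a Berbee-type coupling of the future block $(\xi_j)_{j\geq\lfloor k/3\rfloor}$ against the past, with the coincidence then transferred to the $X$-level through the approximants $f_m$ --- is indeed that argument. Your bookkeeping for the second bullet is also correct: for $i\geq k$ the radius $m=i-\lfloor 2k/3\rfloor$ keeps the window $[\lfloor 2k/3\rfloor,\,2i-\lfloor 2k/3\rfloor]$ inside the coincidence region, and two applications of (\ref{line3}) give $2a_{i-\lfloor 2k/3\rfloor}$.

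The genuine gap sits exactly where you defer to ``the hard part'', and in the form stated it cannot be closed. First, the hybrid $(X'_n)$ (``coupled randomness on the past, original randomness on the far future'') is never constructed, and as described it fails: concatenating pieces of two different trajectories does not reproduce the law of $(\xi_n)$ unless the remaining coordinates are redrawn from the correct conditional distribution with fresh randomness, and since the Berbee copy agrees with the original future only on $A$ (not almost surely), your third bullet would come out with an unwanted factor $\mathds{1}_A$. In the standard construction one takes $X'_n=X''_n$ to be the functional of the Berbee copy extended backwards by exactly such a conditional redraw; the third bullet is then trivial, which shows that the entire content of the lemma lies in the independence claim. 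Second, and decisively, what your coupling yields is independence of $(X''_n)$ from the conditioning past $\mathcal{F}_{-\infty}^{0}$ --- you say so yourself --- whereas the first bullet asserts independence from the whole sequence $(X_n)_{n\in\Z}$, and that stronger claim is not provable by any construction: for fixed $k$ and $i\geq k$, the second and third bullets force $E[\|X_i-X''_i\|\mathds{1}_A]\leq 2a_{i-\lfloor 2k/3\rfloor}+2a_{i+\lfloor k/3\rfloor}\rightarrow 0$ as $i\rightarrow\infty$, while full independence plus stationarity would make $\|X_i-X''_i\|$ distributed as $\|Z-Z'\|$ with $Z,Z'$ independent copies of $X_0$, whence $E[\|X_i-X''_i\|\mathds{1}_A]\geq E\|Z-Z'\|-\sup_{B:\,P(B)\leq\beta_{\lfloor k/3\rfloor}}E[\|Z-Z'\|\mathds{1}_B]$, and the right-hand side is bounded away from zero uniformly in $i$ once $\beta_{\lfloor k/3\rfloor}$ is small, whenever $X_0$ is integrable and not almost surely constant. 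So the bullet as printed overstates the provable result (in \cite{boro} the independence is from the past $\sigma$-field of the underlying process); your route proves, at best, that corrected version. A complete answer should have flagged this rather than promised to ``organize the required independence'', all the more so because the discrepancy propagates: the identity $E\langle X''_0,X_j\rangle=\langle EX''_0,EX_j\rangle$ in the paper's proof of Lemma \ref{lem1} invokes precisely the impossible form of independence, and with the corrected lemma that proof must be rearranged so that the factorization is applied to $X''_j$ against a past-measurable finite-window approximant of $X_0$, using the closeness bullets at index $j$ rather than at index $0$.
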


This is Proposition 2.16 of Borovkova, Burton, and Dehling \cite{boro}.

\begin{lemma}\label{lem1} Let $(X_n)_{n\in\Z}$ be a stationary, 1-approximating functional with constants $(a_m)_{m\in\N}$ of an absolutely regular process $(\xi_n)_{n\in\Z}$ with mixing coefficients $(\beta_m)_{m\in\N}$ and $E\left\|X_1\right\|^{2+\delta}<\infty$ for some $\delta>0$. Then
\begin{multline}
\left|E\langle X_i,X_{i+j}\rangle-\langle EX_i,EX_{i+j}\rangle\right|\\
\leq 2\left(E\left\|X_1\right\|^{2+\delta}\right)^{\frac{2}{2+\delta}}\beta_{[j/3]}^{\delta/(2+\delta)}+4K\left(E\left\|X_1\right\|^{2+\delta}\right)^{\frac{1}{1+\delta}}a_{[j/3]}^{\delta/(2+\delta)}.
\end{multline}
If the random variables $(X_n)_{n\in\N}$ are bounded by a constant $K$, then
\begin{equation}
\left|E\langle X_i,X_{i+j}\rangle-\langle EX_i,EX_{i+j}\rangle\right|\leq 2K^2\beta_{[j/3]}+4Ka_{[j/3]}.
\end{equation}

\end{lemma}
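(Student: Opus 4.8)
The plan is to reduce to the case $i=0$ by stationarity (so that it suffices to bound $|E\langle X_0,X_j\rangle-\langle EX_0,EX_j\rangle|$, since $(X_i,X_{i+j})\stackrel{d}{=}(X_0,X_j)$) and then to decouple $X_j$ from $X_0$ using the coupling construction of Lemma \ref{lem1c}. Concretely, I would apply that lemma with $k=j$, producing sequences $(X_n')_{n\in\Z}$ and $(X_n'')_{n\in\Z}$ distributed like $(X_n)$, with $(X_n'')$ independent of $(X_n)$, and a set $A$ with $P(A^c)\le\beta_{\lfloor j/3\rfloor}$. Since $X_j''$ is independent of $X_0$ and $EX_j''=EX_j$, one has $E\langle X_0,X_j''\rangle=\langle EX_0,EX_j\rangle$, whence
\begin{equation*}
E\langle X_0,X_j\rangle-\langle EX_0,EX_j\rangle=E\langle X_0,X_j-X_j''\rangle,
\end{equation*}
and by the Cauchy--Schwarz inequality in $H$ the modulus is at most $E\big[\|X_0\|\,\|X_j-X_j''\|\big]$. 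I would then split this expectation according to $A$ and $A^c$; the two pieces produce the $\beta$-term and the $a$-term, respectively.

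On $A^c$ I would apply H\"older's inequality to the three factors $\|X_0\|$, $\|X_j-X_j''\|$ and $\mathds{1}_{A^c}$ with exponents $2+\delta$, $2+\delta$ and $(2+\delta)/\delta$. Using $E\|X_0\|^{2+\delta}=E\|X_1\|^{2+\delta}$, Minkowski's inequality together with the identical distributions to get $\big(E\|X_j-X_j''\|^{2+\delta}\big)^{1/(2+\delta)}\le 2\big(E\|X_1\|^{2+\delta}\big)^{1/(2+\delta)}$, and $P(A^c)^{\delta/(2+\delta)}\le\beta_{\lfloor j/3\rfloor}^{\delta/(2+\delta)}$, this piece is bounded by $2\big(E\|X_1\|^{2+\delta}\big)^{2/(2+\delta)}\beta_{[j/3]}^{\delta/(2+\delta)}$, which is exactly the first term.

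On $A$ I would first pass through the intermediate sequence $X_j'$: the triangle inequality and the two closeness estimates of Lemma \ref{lem1c} (the bound $E[\|X_j-X_j'\|\mathds{1}_A]\le 2a_{j-\lfloor 2j/3\rfloor}$ restricted to $A$, and the unrestricted $E\|X_j'-X_j''\|\le 2a_{j+\lfloor j/3\rfloor}$), after using monotonicity of $(a_m)$ to replace the shifted indices by $[j/3]$, give the $L^1$-type bound $E[\|X_j-X_j''\|\mathds{1}_A]\le 4a_{[j/3]}$. The main obstacle is that the covariance carries the extra factor $\|X_0\|$, for which only a $(2+\delta)$-th moment is available, whereas the smallness of $\|X_j-X_j''\|$ is available only in $L^1$ on $A$. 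I would bridge this by H\"older with exponents $2+\delta$ and $(2+\delta)/(1+\delta)$ to extract $\|X_0\|$, and then interpolate $E\big[(\|X_j-X_j''\|\mathds{1}_A)^{(2+\delta)/(1+\delta)}\big]$ by log-convexity of $t\mapsto E[(\|X_j-X_j''\|\mathds{1}_A)^{t}]$ between the $L^1$ bound $4a_{[j/3]}$ and the $L^{2+\delta}$ moment bound. Computing the interpolation weights, the total exponent of $E\|X_1\|^{2+\delta}$ comes out to be exactly $1/(1+\delta)$, so this piece is bounded by a numerical constant times $\big(E\|X_1\|^{2+\delta}\big)^{1/(1+\delta)}a_{[j/3]}^{\delta/(1+\delta)}$; since $a_{[j/3]}\to 0$ this is dominated by the stated term $4K\big(E\|X_1\|^{2+\delta}\big)^{1/(1+\delta)}a_{[j/3]}^{\delta/(2+\delta)}$ (indeed the natural exponent $\delta/(1+\delta)$ is sharper than $\delta/(2+\delta)$). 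This interpolation on $A$ is the step I expect to require the most care.

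Finally, the bounded case is the easy specialization of the same decomposition: with $\|X_i\|\le K$ no H\"older or interpolation is needed. On $A^c$ one simply bounds $\|X_0\|\,\|X_j-X_j''\|\le 2K^2$ and multiplies by $P(A^c)\le\beta_{[j/3]}$ to obtain $2K^2\beta_{[j/3]}$, while on $A$ one bounds $\|X_0\|\le K$ and uses $E[\|X_j-X_j''\|\mathds{1}_A]\le 4a_{[j/3]}$ to obtain $4Ka_{[j/3]}$, which gives the second assertion.
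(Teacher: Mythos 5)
Your proof is correct, and its core is the same as the paper's: apply the coupling of Lemma \ref{lem1c} with $k=j$, use independence of the doubly-primed copy to identify $\langle EX_0,EX_j\rangle$ with an expected inner product, and split over $A$ and $A^c$. The only structural difference in the bounded case is the direction of decoupling: the paper writes $E\langle X_0,X_j\rangle-\langle EX_0,EX_j\rangle=E[\langle X'_0,X'_j-X_j\rangle+\langle X'_0-X''_0,X_j\rangle]$, replacing the \emph{first} argument by the independent copy $X''_0$ and the second by the original $X_j$, whereas you keep $X_0$ fixed and push the \emph{second} argument through the chain $X_j\to X'_j\to X''_j$; both routes invoke exactly the same two closeness estimates of Lemma \ref{lem1c} and both land on $2K^2\beta_{[j/3]}+4Ka_{[j/3]}$. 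Where you genuinely add something is the unbounded case: the paper proves only the bounded case in detail and delegates the moment version to Lemma 2.18 of Borovkova, Burton, and Dehling \cite{boro}, while you carry out the argument — three-factor H\"older with exponents $2+\delta$, $2+\delta$, $(2+\delta)/\delta$ on $A^c$, and on $A$ the Lyapunov interpolation $\|Y\|_{(2+\delta)/(1+\delta)}\le\|Y\|_1^{\delta/(1+\delta)}\|Y\|_{2+\delta}^{1/(1+\delta)}$ — and your bookkeeping (the exponent of $E\|X_1\|^{2+\delta}$ coming out to $1/(1+\delta)$, the constant $4^{\delta/(1+\delta)}2^{1/(1+\delta)}\le 4$) is accurate. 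Two remarks on the mismatch with the stated display. First, your domination claim $a_{[j/3]}^{\delta/(1+\delta)}\le a_{[j/3]}^{\delta/(2+\delta)}$ requires $a_{[j/3]}\le 1$, so it holds only for all sufficiently large $j$; you should not present the two exponents as interchangeable for every $j$. Second, and more importantly, the lemma as printed is itself defective: the factor $K$ in the first display is undefined in the unbounded setting, and in the proof of Theorem \ref{theo1} the paper uses this covariance inequality with the constant $C_2=8\sum_{m}a_{[m/3]}^{\delta/(1+\delta)}$, i.e., precisely with the exponent $\delta/(1+\delta)$ that your interpolation produces (and that matches the summability hypothesis $\sum_m a_m^{\delta/(1+\delta)}<\infty$ of Theorem \ref{theo1}). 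So what you have proved is the intended, corrected form of the lemma, not a weaker variant. One last small point, which applies equally to the paper's own proof: replacing the shifted indices $j-\lfloor 2j/3\rfloor$ and $j+\lfloor j/3\rfloor$ by $[j/3]$ uses monotonicity of $(a_m)_{m\in\N}$, which should be stated as a harmless normalization (one may always replace $a_m$ by $\sup_{k\ge m}a_k$).
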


\begin{proof}
For real-valued random variables, this is Lemma 2.18 of Borovkova et al.\cite{boro}. The $H$-valued case can be proved in the same way, so we only give the details for the bounded case. Without loss of generality, let $i=0$. Let $(X'_n)_{n\in\Z}$ and $(X''_n)_{n\in\Z}$ be copies of $(X_n)_{n\in\Z}$ as in Lemma \ref{lem1c}. By the independence of the sequences $(X_n)_{n\in\Z}$ and $(X''_n)_{n\in\Z}$, we have
\begin{equation}
E\langle X''_0,X_j\rangle=\langle EX''_0,EX_j\rangle=\langle EX_0,EX_j\rangle.
\end{equation}
So we obtain
\begin{multline}
\left|E\langle X_0,X_{j}\rangle-\langle EX_0,EX_{j}\rangle\right|=\left|E\langle X'_0,X'_{j}\rangle-\langle EX''_0,EX_{
j}\rangle\right|\\
=\left|E\left[\langle X'_0,X'_{j}\rangle-\langle X''_0,X_{j}\rangle \right]\right|=\left|E\left[\langle X'_0,X'_{j}-X_j\rangle+\langle X'_0-X''_0,X_{j}\rangle \right]\right|\displaybreak[0]\\
\leq \left|E\left[\langle X'_0,X'_{j}-X_j\rangle\right]\right|+\left|E\left[\langle X'_0-X''_0,X_{j}\rangle \right]\right|\leq KE\left\|X'_{j}-X_j\right\|+K\left\|X'_0-X''_0\right\|\\
\leq KE\left\|X'_{j}-X_j\right\|\mathds{1}_A+KE\left\|X'_{j}-X_j\right\|\mathds{1}_{A^C}+K\left\|X'_0-X''_0\right\|\\
\leq 2K a_{[j/3]}+2K^2P(A^C)+2K a_{[j/3]}=2K^2\beta_{[j/3]}+4Ka_{[j/3]}.
\end{multline}

\end{proof}

\begin{lemma}\label{lem2} Let $(X_n)_{n\in\Z}$ be a real-valued, stationary, 1-approximating functional with constants $(a_m)_{m\in\N}$ of an absolutely regular process $(\xi_n)_{n\in\Z}$ with mixing coefficients $(\beta_m)_{m\in\N}$ such that $EX_i=0$ and for some $\delta>0$ $E|X_1|^{2+\delta}<\infty$, $\sum_{m=1}^\infty a_m^{\delta/(1+\delta)}<\infty$ and $\sum_{m=1}^\infty \beta_m^{\delta/(2+\delta)}<\infty$. Then
\begin{equation}
\frac{1}{\sqrt{n}}\left(X_n+\ldots+X_n\right)\Rightarrow N(0,\sigma^2)
\end{equation}
with $\sigma^2=\sum_{j=-\infty}^\infty \Cov(X_0,X_j)<\infty$.
\end{lemma}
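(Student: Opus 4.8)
The plan is to prove this classical-type central limit theorem by Bernstein's big-block/small-block method, using the coupling provided by Lemma \ref{lem1c} to render the big blocks asymptotically independent, and then applying the Lindeberg--Feller theorem. Throughout write $S_n=X_1+\cdots+X_n$ (the statement's $X_n+\ldots+X_n$ is a typo), and note that here $\|\cdot\|$ is just $|\cdot|$. First I would settle the variance: by stationarity
\[
\frac1n \Var(S_n) = \sum_{|j|<n}\Big(1-\frac{|j|}{n}\Big)\Cov(X_0,X_j).
\]
Lemma \ref{lem1} bounds $|\Cov(X_0,X_j)|$ by fractional powers of $\beta_{[j/3]}$ and $a_{[j/3]}$, so that conditions (2) and (3) give $\sum_{j}|\Cov(X_0,X_j)|<\infty$; dominated convergence then yields $\frac1n\Var(S_n)\to\sigma^2=\sum_{j=-\infty}^\infty\Cov(X_0,X_j)\in[0,\infty)$. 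If $\sigma^2=0$ the statement is trivial, so assume $\sigma^2>0$.

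Next comes the blocking. I would choose integer sequences $p=p_n$ (big blocks) and $q=q_n$ (gaps) with $q_n\to\infty$, $q_n/p_n\to0$, $p_n/n\to0$, and block count $r_n=\lfloor n/(p_n+q_n)\rfloor\to\infty$; a concrete choice such as $p_n\sim\sqrt n$, $q_n\sim n^{1/4}$ works. Partition $\{1,\ldots,n\}$ into alternating big blocks $J_1,\ldots,J_{r}$ of length $p$ and gaps of length $q$ (plus a negligible remainder), set $Y_k=\sum_{i\in J_k}X_i$, and write $S_n=\sum_k Y_k+R_n$, where $R_n$ collects the gap and remainder terms. Summing the covariance bound of Lemma \ref{lem1} over the short stretches shows $\frac1n\Var(R_n)\to0$, hence $R_n/\sqrt n\to0$ in $L^2$ and in probability and $\frac1n\Var(\sum_k Y_k)\to\sigma^2$; it therefore suffices to prove the CLT for $\frac1{\sqrt n}\sum_k Y_k$.

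The heart of the argument is to replace the dependent blocks $Y_1,\ldots,Y_r$ by independent ones. Iterating the coupling of Lemma \ref{lem1c} along the gaps, I would construct independent variables $\tilde Y_1,\ldots,\tilde Y_r$ with $\tilde Y_k\stackrel{d}{=}Y_k$ and $E\big|\frac1{\sqrt n}\sum_k(Y_k-\tilde Y_k)\big|\to0$; the gap length $q_n\to\infty$ is exactly what makes the accumulated coupling error (governed by $\beta_{[q_n/3]}$ together with the $L^1$-approximation constants $a_m$) vanish after normalization. Once the blocks are independent I would verify the Lindeberg condition for the array $\{\tilde Y_k/\sqrt n\}$. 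This needs a uniform moment bound $E|Y_k|^{2+\delta'}\le C\,p^{1+\delta'/2}$ (a Rosenthal-/Marcinkiewicz--Zygmund-type inequality for partial sums of a $1$-approximating functional of an absolutely regular sequence), from which
\[
\sum_{k} E\Big[(\tilde Y_k/\sqrt n)^2\,\mathbf 1_{\{|\tilde Y_k|>\varepsilon\sqrt n\}}\Big]\le \frac{1}{\varepsilon^{\delta'} n^{1+\delta'/2}}\sum_k E|Y_k|^{2+\delta'}\le \frac{C}{\varepsilon^{\delta'}}\Big(\frac{p_n}{n}\Big)^{\delta'/2}\to0 .
\]
Lindeberg--Feller then gives $\frac1{\sqrt n}\sum_k\tilde Y_k\Rightarrow N(0,\sigma^2)$, and undoing the coupling and adding back $R_n$ completes the proof.

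I expect the main obstacle to be the coupling step together with the block-moment bound: one must choose $p_n$ and $q_n$ so that the cumulative $L^1$ coupling error across all $r_n$ blocks is $o(\sqrt n)$, while simultaneously obtaining uniform $(2+\delta')$-moment control on the block sums for Lindeberg. Both are precisely where the summability hypotheses (2) and (3) on $a_m$ and $\beta_m$ enter, and balancing the block length, the gap length, and the decay rates is the delicate part of the argument.
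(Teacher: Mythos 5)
Your strategy---Bernstein blocking, coupling via Lemma \ref{lem1c}, Lindeberg--Feller---is the classical route to results of this type, but two of its steps fail as written. First, the Lindeberg verification hangs entirely on the block-moment bound $E|Y_k|^{2+\delta'}\le C\,p^{1+\delta'/2}$, which you invoke as a known Rosenthal-/Marcinkiewicz--Zygmund-type inequality. No such inequality is available under the hypotheses of Lemma \ref{lem2}: known sufficient conditions require polynomial-rate assumptions such as $\sum_m m^{\delta'/2}\beta_m^{(\delta-\delta')/(2+\delta)}<\infty$, and even this paper's own higher-moment bound (Lemma \ref{lem7}) needs \emph{bounded} variables together with $\sum_m m(a_m+\beta_m)<\infty$---all strictly stronger than conditions (2)--(3) of the lemma, which permit, e.g., $\beta_m\asymp m^{-(2+\delta)/\delta}(\log m)^{-2(2+\delta)/\delta}$. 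Second, your concrete choice $p_n\sim\sqrt n$, $q_n\sim n^{1/4}$ cannot work in that generality: on the failure event $A^C$ of Lemma \ref{lem1c} you have no pointwise control, and (lacking higher moments of block sums, by the first gap) only Cauchy--Schwarz with $E|Y_k-\tilde Y_k|^2\le Cp$ is available, so the accumulated coupling error after normalization is of order $\sqrt{(n/p)\,\beta_{\lfloor q/3\rfloor}}$; for $\beta_m\asymp m^{-(2+\delta)/\delta}$ with $\delta>2$ this is of order $n^{(\delta-2)/(8\delta)}\to\infty$. Both gaps have the same cure, which is the idea missing from your outline: truncate the $X_i$ at a slowly growing level $K_n$ (the truncation error is controlled in $L^2$ through the covariance bound of Lemma \ref{lem1}, exactly as in the paper's proof of Theorem \ref{theo2}); then block sums are bounded by $pK_n$, Lindeberg holds trivially once $p_nK_n=o(\sqrt n\,)$, and the coupling-failure term becomes $O(\sqrt n\,K_n\beta_{\lfloor q_n/3\rfloor})$. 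Even then, $p_n$, $q_n$, $K_n$ must be chosen adaptively to the actual decay of $(a_m)$ and $(\beta_m)$; no fixed powers of $n$ suffice under bare summability hypotheses.

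For comparison: the paper performs no blocking argument at all. Its proof of Lemma \ref{lem2} is a one-line reduction, citing Theorem 8.6.2 of Ibragimov and Linnik \cite{ibra} (a CLT for functionals of mixing sequences), and using Lemma \ref{lem3} only to convert the $L_1$-approximation rate $a_m$ into a summable $L_{(2+\delta)/(1+\delta)}$-approximation rate $Ca_m^{\delta/(1+\delta)}$, as that theorem requires; absolute regularity passes to strong mixing via $\alpha_m\le\beta_m$. So what you propose is a from-scratch reproof of the outsourced classical theorem. That is legitimate in principle, but the truncation and adaptive-blocking details above are precisely the content that the citation buys, and they are the parts your outline leaves unproved.
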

This Lemma follows from Theorem 8.6.2 of Ibragimov and Linnik \cite{ibra} and Lemma \ref{lem3} below. In the case $\sigma^2=0$, $N(0,\sigma^2)$ shall be understood as the point mass in the origin.

\begin{lemma}\label{lem3} Let $(X_n)_{n\in\Z}$ be a real-valued, 1-approximating functional with constants $(a_m)_{m\in\N}$ of an absolutely regular process $(\xi_n)_{n\in\Z}$ with mixing coefficients $(\beta_m)_{m\in\N}$ such that $EX_i=0$ and for some $\delta>0$ $E|X_1|^{2+\delta}<\infty$. Then
\begin{equation}
\left(E\left|X_n-f_m(\xi_{n-m},\ldots,\xi_{n+m})\right|^{(2+\delta)/(1+\delta)}\right)^{(1+\delta)/(2+\delta)}\leq C a_m^{\delta/(1+\delta)} \ \ \ \text{for all} \ \ m\in\N
\end{equation}
for a constant $C>0$.
\end{lemma}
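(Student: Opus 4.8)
The plan is to prove the inequality by interpolating the approximation error between $L^1$ and $L^{2+\delta}$. By stationarity we may set $n=0$ and write $Y_m := X_0 - f_m(\xi_{-m},\ldots,\xi_m)$; the left-hand side is then exactly $\|Y_m\|_{(2+\delta)/(1+\delta)}$, a norm with exponent $p=(2+\delta)/(1+\delta)\in(1,2)$. The defining property (\ref{line3}) gives $E|Y_m|\le a_m$, i.e. control in $L^1$, and $E|X_1|^{2+\delta}<\infty$ should provide control in $L^{2+\delta}$; Lyapunov's interpolation inequality $\|Y_m\|_p\le\|Y_m\|_1^{1-\theta}\|Y_m\|_{2+\delta}^{\theta}$ with $\tfrac1p=\tfrac{1-\theta}{1}+\tfrac{\theta}{2+\delta}$ will then produce the stated power $a_m^{\delta/(1+\delta)}$ once I check that $\theta=\tfrac{1}{1+\delta}$ and hence $1-\theta=\tfrac{\delta}{1+\delta}$, which is a short arithmetic verification using $\tfrac{1}{p}=\tfrac{1+\delta}{2+\delta}$.

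The difficulty, and the only real step, is that the arbitrary approximating functions $f_m$ furnished by (\ref{line3}) carry no control on their higher moments: a function $f_m$ that differs from $X_0$ by a large value on a set of probability of order $a_m$ keeps $E|Y_m|\le a_m$ while making $\|Y_m\|_{2+\delta}$ blow up, so the inequality is false for a generic choice of $f_m$. I would therefore replace $f_m$ by the conditional expectation $\hat f_m:=E[X_0\mid\mathcal{F}_{-m}^m]$, which by the Doob--Dynkin lemma is again a measurable function of $\xi_{-m},\ldots,\xi_m$. Two properties make this work: conditional expectation is an $L^1$-contraction, so since $\hat f_m-f_m=E[X_0-f_m\mid\mathcal{F}_{-m}^m]$ the triangle inequality gives $E|X_0-\hat f_m|\le E|X_0-f_m|+E\big|E[X_0-f_m\mid\mathcal{F}_{-m}^m]\big|\le 2E|X_0-f_m|\le 2a_m$; and conditional Jensen gives $\|\hat f_m\|_{2+\delta}\le\|X_0\|_{2+\delta}$, whence $\|X_0-\hat f_m\|_{2+\delta}\le 2\|X_0\|_{2+\delta}=:2M$ uniformly in $m$.

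With these two bounds in hand the interpolation inequality yields $\|X_0-\hat f_m\|_p\le(2a_m)^{\delta/(1+\delta)}(2M)^{1/(1+\delta)}=C\,a_m^{\delta/(1+\delta)}$ with $C=2^{\delta/(1+\delta)}(2M)^{1/(1+\delta)}$ independent of $m$, which is the assertion. The one interpretive point to flag is that the lemma must be read as asserting the \emph{existence} of approximating functions (here the conditional expectations, at the cost of a harmless factor $2$ in $a_m$) that achieve this higher-moment rate; this is exactly the summable $L^{(2+\delta)/(1+\delta)}$-approximation input needed to apply the Ibragimov--Linnik central limit theorem invoked in Lemma \ref{lem2}, where condition (2) guarantees $\sum_m a_m^{\delta/(1+\delta)}<\infty$. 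An equivalent route would truncate the given $f_m$ at a level growing with $m$, but the conditional-expectation choice makes the uniform $L^{2+\delta}$ bound immediate and avoids optimizing a truncation threshold.
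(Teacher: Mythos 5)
Your proof is correct, and its analytic core coincides with the paper's: the paper proves exactly your interpolation bound by hand, splitting $E|Y_m|^{(2+\delta)/(1+\delta)}$ at the threshold $a_m^{-1/(1+\delta)}$ and bounding the two pieces by $E|Y_m|$ and $E|Y_m|^{2+\delta}$ respectively via Markov's inequality --- this is precisely the standard truncation proof of the Lyapunov inequality you invoke, yielding the same exponent $a_m^{\delta(2+\delta)/(1+\delta)^2}$ before the $(1+\delta)/(2+\delta)$-th root is taken. The genuine difference lies in the treatment of the higher moment. The paper's proof simply asserts ``$E|Y_m|\leq a_m$ and $E|Y_m|^{2+\delta}<\infty$'' for the given approximating functions and lets the constant $C$ absorb the latter quantity; but, as you correctly observe, the definition (\ref{line3}) constrains $f_m$ only in $L^1$, so $E|Y_m|^{2+\delta}$ need not be finite, let alone bounded uniformly in $m$ --- and uniformity in $m$ is what the stated conclusion requires. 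Your replacement of $f_m$ by $\hat{f}_m=E[X_0\mid\mathcal{F}_{-m}^{m}]$, with the $L^1$-contraction argument giving $E|X_0-\hat{f}_m|\leq 2a_m$ and conditional Jensen giving the uniform bound $\left(E|X_0-\hat{f}_m|^{2+\delta}\right)^{1/(2+\delta)}\leq 2\left(E|X_0|^{2+\delta}\right)^{1/(2+\delta)}$, closes this gap cleanly, at the harmless cost of reading the lemma as an existence statement about (re-chosen) approximants; that reading is the right one, since Lemma \ref{lem2} only needs \emph{some} approximating sequence with summable $L^{(2+\delta)/(1+\delta)}$ errors to feed the Ibragimov--Linnik theorem. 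In short: same interpolation argument, but your version makes explicit, and repairs, an assumption that the paper's proof leaves unjustified.
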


\begin{proof} We define $Y_m:=X_n-f_m(\xi_{n-m},\ldots,\xi_{n+m})$, so that $E|Y_m|\leq a_m$ and $E|Y_m|^{2+\delta}<\infty$. Consequently by the Markov inequality
\begin{multline}
E|Y_m|^{(2+\delta)/(1+\delta)}\leq E|Y_m|^{(2+\delta)/(1+\delta)}\mathds{1}_{\{|Y_m|\leq a_m^{-1/(1+\delta)}\}}+E|Y_m|^{(2+\delta)/(1+\delta)}\mathds{1}_{\{|Y_m|>a_m^{-1/(1+\delta)}\}}\\
\leq \left(a_m^{-1/(1+\delta)}\right)^{\frac{2+\delta}{1+\delta}-1}E|Y_m|+\left(a_m^{-1/(1+\delta)}\right)^{(2+\delta)-\frac{2+\delta}{1+\delta}-1}E|Y_m|^{2+\delta}\leq Ca_m^{\frac{\delta(2+\delta)}{(1+\delta)^2}}
\end{multline}
and finally $\left(E\left|Y_m\right|^{(2+\delta)/(1+\delta)}\right)^{(1+\delta)/(2+\delta)}\leq C a_m^{\delta/(1+\delta)}$.
\end{proof}

\begin{lemma}\label{lem4} Let $(X_n)_{n\in\N}$ be a stationary sequence of random variables with values in $H$ such that $EX_1=0$, $E\left\|X_1\right\|^2<\infty$. If there exists a constant $C>0$ such that for all $n\in\N$
\begin{equation}
E\left\|X_{1}+X_2+\ldots+X_n\right\|^2\leq Cn,
\end{equation}
then for all $l\in\N$
\begin{equation}\label{line39}
E\max_{n\leq 2^l}\left\|\left(X_{1}+X_2+\ldots+X_n\right)\right\|^2\leq C2^l l^2.
\end{equation}
\end{lemma}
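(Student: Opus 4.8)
The plan is to prove a Rademacher--Menshov / Móricz type maximal inequality by dyadic chaining. Write $S_0=0$ and $S_n=X_1+\dots+X_n$, and for a level $0\le r\le l$ and a block index $0\le k<2^{l-r}$ set $D_{r,k}=S_{(k+1)2^r}-S_{k2^r}$, the increment of the partial-sum process over the $k$-th dyadic block of length $2^r$. The hypotheses enter as follows: by stationarity each increment $D_{r,k}$ has the same distribution as $S_{2^r}=X_1+\dots+X_{2^r}$, so the assumed bound $E\|S_n\|^2\le Cn$ yields $E\|D_{r,k}\|^2=E\|S_{2^r}\|^2\le C2^r$ uniformly in $k$.

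First I would record the dyadic decomposition of an arbitrary index. For each $n\in\{1,\dots,2^l\}$ the half-open interval $(0,n]$ is a disjoint union of dyadic blocks, at most one at each level, read off from the binary expansion of $n$; consequently $S_n$ is the sum of the corresponding increments $D_{r,k}$, and bounding each summand by the maximum over all blocks at its level gives
\begin{equation*}
\|S_n\|\le\sum_{r=0}^{l}\max_{0\le k<2^{l-r}}\|D_{r,k}\|.
\end{equation*}
The crucial feature is that the right-hand side no longer depends on $n$, so I may take the maximum over $n\le 2^l$ on the left without changing the bound.

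Next I would square, apply the Cauchy--Schwarz inequality to the sum of at most $l+1$ terms, and take expectations, yielding
\begin{equation*}
E\max_{n\le 2^l}\|S_n\|^2\le(l+1)\sum_{r=0}^{l}E\max_{0\le k<2^{l-r}}\|D_{r,k}\|^2.
\end{equation*}
Each inner expectation is controlled by replacing the maximum over the $2^{l-r}$ blocks by the sum, $E\max_k\|D_{r,k}\|^2\le\sum_k E\|D_{r,k}\|^2\le 2^{l-r}\cdot C2^r=C2^l$. Substituting this bound, which is independent of $r$, gives $E\max_{n\le 2^l}\|S_n\|^2\le(l+1)^2C2^l$, which is the claimed estimate after adjusting the constant (the factor $(l+1)^2$ is at most $4l^2$ for $l\ge 1$, and may be absorbed into $C$).

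The main obstacle is the decomposition step: one must verify that, uniformly in $n$, the partial sum $S_n$ uses at most one dyadic increment per level, so that the order of $\max_n$ and the level-sum can be interchanged and the per-level maxima estimated separately. Everything else is a routine application of Cauchy--Schwarz and of the elementary bound $\max_k\|D_{r,k}\|^2\le\sum_k\|D_{r,k}\|^2$, together with the stationarity that makes every block of a given length contribute the same second moment.
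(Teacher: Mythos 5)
Your proof is correct, but it differs from the paper in an important structural way: the paper does not prove Lemma 4 at all, it simply cites it as a special case of Theorem 3 in M\'oricz \cite{mori}, with the remark that the result carries over to Hilbert spaces. What you have done is reconstruct the classical Rademacher--Menshov chaining argument from scratch: the binary decomposition of $(0,n]$ into at most one dyadic block per level, the resulting bound $\max_{n\le 2^l}\|S_n\|\le\sum_{r=0}^l\max_k\|D_{r,k}\|$, Cauchy--Schwarz over the $l+1$ levels, and the crude but sufficient bound $E\max_k\|D_{r,k}\|^2\le\sum_k E\|D_{r,k}\|^2\le C2^l$ at each level. All steps check out: stationarity does give $E\|D_{r,k}\|^2=E\|S_{2^r}\|^2\le C2^r$, the decomposition argument is valid because each block at level $r_i$ starts at a multiple of $2^{r_i}$, and your final constant $(l+1)^2\le 4l^2$ is a harmless adjustment given the paper's convention of generic constants. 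Your route buys self-containedness and transparency --- in particular it makes evident why the inequality holds in a Hilbert space, since only the triangle inequality for $\|\cdot\|$ and scalar Cauchy--Schwarz are used, which is exactly the point the paper waves at with ``(which also holds in Hilbert spaces)''. The citation to M\'oricz buys generality that your argument does not need here: his theorem dispenses with stationarity, working instead with superadditive bounds on moments of increments $S_{a,n}$, and covers moment exponents other than $2$; your use of stationarity to reduce every block to $S_{2^r}$ is a genuine simplification made possible by the lemma's hypotheses.
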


This is a special case of Theorem 3 in M\'oricz \cite{mori} (which also holds in Hilbert spaces). Maximal inequalities of this type were first introduced by Rademacher \cite{rade} and Menchoff \cite{menc}. The next lemma can be found in the paper of Shao and Yu \cite{shao} for real-valued random variables.

\begin{lemma}\label{lem5} Let $(X_n)_{n\in\N}$ be a stationary sequence of random variables with values in $H$ such that $EX_1=0$, $E\left\|X_1\right\|^2<\infty$ and for some $C>0$
\begin{equation}
E\left\|X_{1}+X_2+\ldots+X_n\right\|^2\leq Cn.
\end{equation}
Then almost surely as $n\rightarrow\infty$
\begin{equation}
\frac{1}{\sqrt{n}\log^2n}\left\|X_1+\ldots+X_n\right\|\rightarrow 0.
\end{equation}
\end{lemma}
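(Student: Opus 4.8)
The plan is to combine the maximal inequality of Lemma \ref{lem4} with a dyadic blocking argument and the Borel--Cantelli lemma. Write $S_n := X_1 + \cdots + X_n$ and, for $l \in \N$, set $M_l := \max_{n \le 2^l} \|S_n\|$. The hypothesis $E\|S_n\|^2 \le Cn$ is exactly the assumption of Lemma \ref{lem4}, so that lemma applies and yields $E M_l^2 \le C\, 2^l l^2$. The idea is that $\sqrt{n}\log^2 n$ is comparable, uniformly over each dyadic block $2^{l-1} < n \le 2^l$, to the block-scale normalization $2^{l/2} l^2$, so it suffices to control the maxima $M_l$ along the subsequence $n = 2^l$.

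The key step is to bound the dyadic maxima. For fixed $\varepsilon > 0$, Chebyshev's inequality together with the bound from Lemma \ref{lem4} gives
\[
P\left( M_l > \varepsilon\, 2^{l/2} l^2 \right) \le \frac{E M_l^2}{\varepsilon^2\, 2^l l^4} \le \frac{C\, 2^l l^2}{\varepsilon^2\, 2^l l^4} = \frac{C}{\varepsilon^2 l^2}.
\]
Since $\sum_{l=1}^\infty l^{-2} < \infty$, the Borel--Cantelli lemma shows that almost surely $M_l \le \varepsilon\, 2^{l/2} l^2$ for all but finitely many $l$. Letting $\varepsilon$ run through a sequence decreasing to $0$, I conclude that almost surely $M_l / (2^{l/2} l^2) \to 0$ as $l \to \infty$.

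It remains to transfer this rate from the dyadic subsequence to the full sequence. Given $n$ large, choose $l$ with $2^{l-1} < n \le 2^l$; then $\|S_n\| \le M_l$, while $\sqrt{n} \ge 2^{(l-1)/2}$ and $\log^2 n \ge ((l-1)\log 2)^2$. Hence, for $l \ge 2$,
\[
\frac{\|S_n\|}{\sqrt{n}\,\log^2 n} \le \frac{M_l}{2^{(l-1)/2}\, ((l-1)\log 2)^2} = \frac{M_l}{2^{l/2} l^2}\cdot \frac{\sqrt{2}\, l^2}{(\log 2)^2 (l-1)^2}.
\]
The second factor converges to $\sqrt{2}/(\log 2)^2$ and so remains bounded; combined with the almost sure convergence $M_l/(2^{l/2} l^2) \to 0$ established above, this forces the left-hand side to tend to $0$ almost surely, which is the claim.

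The argument is essentially routine, and there is no real obstacle once Lemma \ref{lem4} is invoked. The only point requiring mild care is the final comparison step: one must check that the normalizing sequence $\sqrt{n}\log^2 n$ is comparable, uniformly within each dyadic block and not merely at the endpoints $n = 2^l$, to the block-scale normalization $2^{l/2} l^2$, so that the convergence obtained along the subsequence indeed transfers to the full sequence without any loss in the rate. The degenerate case $l = 1$ (where $\log 2^{l-1} = 0$) only concerns finitely many initial $n$ and is irrelevant for the limit.
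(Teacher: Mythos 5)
Your proof is correct and follows essentially the same route as the paper's own argument: the maximal inequality of Lemma \ref{lem4} on dyadic blocks, Chebyshev's inequality, and the Borel--Cantelli lemma, followed by comparing $\sqrt{n}\log^2 n$ to $2^{l/2}l^2$ within each block. The only (immaterial) difference is that you extract the almost sure convergence $M_l/(2^{l/2}l^2)\rightarrow 0$ by letting $\varepsilon$ run through a countable sequence, while the paper applies Borel--Cantelli directly to the normalized maxima for each fixed $\varepsilon$.
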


\begin{proof} First note that for $n\in\left\{2^{l-1}+1,\ldots,2^l\right\}$
\begin{equation}
\frac{1}{\sqrt{n}\log^2 n}\left\|X_1+\ldots+X_n\right\|\leq \frac{1}{2^{\frac{l-1}{2}}(l-1)^2}\max_{m\leq 2^l}\left\|X_1+\ldots+X_m\right\|\label{line42}.
\end{equation}
Using Lemma \ref{lem4}, we get with the help of Chebyshev's inequality
\begin{multline}
\sum_{l=1}^\infty P\left(\frac{1}{2^{\frac{l-1}{2}}(l-1)^2}\max_{m\leq 2^l}\left\|X_1+\ldots+X_m\right\|\geq\epsilon\right)\\
\leq \frac{1}{\epsilon^2}\sum_{l=1}^\infty E\left[\left(\frac{1}{2^{\frac{l-1}{2}}(l-1)^2}\max_{m\leq 2^l}\left\|X_1+\ldots+X_m\right\|\right)^2\right]\leq \frac{C}{\epsilon^2}\frac{2^l l^2}{2^{l-1}l^4}<\infty.
\end{multline}
So with the Borel-Cantelli-lemma, we can conclude that
\begin{equation}
P\left(\frac{1}{2^{\frac{l-1}{2}}(l-1)^2}\max_{m\leq 2^l}\left\|X_1+\ldots+X_m\right\|\geq\epsilon\ \ \text{infinely often}\right)=0.
\end{equation}
That means that the right side of (\ref{line42}) converges to 0 almost surely and the statement of the lemma follows.
\end{proof}

\begin{lemma}\label{lem6} Let $(X_n)_{n\in\Z}$ be a stationary and 1-approximating functional with approximation constants $(a_m)_{m\in\N}$ of an absolutely regular process with mixing coefficients $(\beta_m)_{m\in\N}$. If $X_i$ is bounded by $K$, $EX_1=0$ and
\begin{equation}
\sum_{m=1}^\infty (a_m+\beta_m)<\infty.
\end{equation}
Then
\begin{equation}\label{line46}
E\left\|X_1+X_2+\ldots+X_n\right\|^2\leq C n\left(K^2+\sum_{m=3}^{\lceil\frac{n}{3}\rceil}(K a_m+K^2\beta_m)\right).
\end{equation}
If
\begin{enumerate}
\item $E\left\|X_1\right\|^{2+\delta}<\infty$,
\item $\sum_{m=1}^\infty(a_m)^{\delta/(1+\delta)}<\infty$,
\item $\sum_{m=1}^\infty(\beta_m)^{\delta/(2+\delta)}<\infty$.
\end{enumerate}
then
\begin{equation}
E\left\|X_1+X_2+\ldots+X_n\right\|^2\leq C n\left(\left(E\left\|X_1\right\|^{2+\delta}\right)^{\frac{2}{2+\delta}}+\left(E\left\|X_1\right\|^{2+\delta}\right)^{\frac{1}{1+\delta}}\right)
\end{equation}
\end{lemma}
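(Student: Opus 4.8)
The plan is to reduce everything to the scalar covariance bound in Lemma~\ref{lem1} by expanding the squared Hilbert-space norm into inner products. Since $H$ is a real separable Hilbert space, the inner product is bilinear and symmetric, so I would first write
\[
E\left\|X_1+X_2+\ldots+X_n\right\|^2=\sum_{i=1}^n\sum_{j=1}^n E\langle X_i,X_j\rangle .
\]
Using stationarity of the $H$-valued sequence (so that $E\langle X_i,X_j\rangle$ depends only on the lag $j-i$) together with the symmetry $\langle X_i,X_j\rangle=\langle X_j,X_i\rangle$, I would group the terms by lag $m=|i-j|$ and obtain
\[
E\left\|X_1+\ldots+X_n\right\|^2=n\,E\left\|X_0\right\|^2+2\sum_{m=1}^{n-1}(n-m)\,E\langle X_0,X_m\rangle .
\]
The crucial observation is that $EX_0=0$ forces $\langle EX_0,EX_m\rangle=0$, so $E\langle X_0,X_m\rangle=E\langle X_0,X_m\rangle-\langle EX_0,EX_m\rangle$ is exactly the quantity controlled by Lemma~\ref{lem1}. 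This turns the whole estimate into a sum of covariance bounds, which is the heart of the argument.

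For the bounded case I would estimate the diagonal term by $n\,E\|X_0\|^2\leq nK^2$, bound $(n-m)\leq n$ to factor out an $n$, and apply the bounded version of Lemma~\ref{lem1} to each lag, giving $|E\langle X_0,X_m\rangle|\leq 2K^2\beta_{\lfloor m/3\rfloor}+4K a_{\lfloor m/3\rfloor}$. The only remaining bookkeeping is the reindexing of the sum over $\lfloor m/3\rfloor$: each value of the index is attained by at most three consecutive $m$, and $\lfloor m/3\rfloor$ ranges up to $\lceil n/3\rceil$ as $m$ runs to $n-1$; the two lags with $\lfloor m/3\rfloor=0$ (namely $m=1,2$) contribute only bounded constants times $K^2$, which are absorbed into the leading $nK^2$ term. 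Collecting these pieces and enlarging the universal constant yields the stated bound $C n\bigl(K^2+\sum_{m=3}^{\lceil n/3\rceil}(K a_m+K^2\beta_m)\bigr)$.

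For the $(2+\delta)$-moment case the structure is identical, only the inputs change. I would bound the diagonal by $E\|X_0\|^2\leq (E\|X_0\|^{2+\delta})^{2/(2+\delta)}$ via Lyapunov's inequality, producing the first term on the right, and apply the general inequality of Lemma~\ref{lem1} to each lag. After factoring out $n$ and reindexing as before, I am left with the two lag-sums $\sum_m \beta_{\lfloor m/3\rfloor}^{\delta/(2+\delta)}$ and $\sum_m a_{\lfloor m/3\rfloor}^{\delta/(1+\delta)}$, each of which is at most three times the corresponding full series and hence finite by assumptions (3) and (2). These finite sums are absorbed into the constant $C$, leaving precisely $C n\bigl((E\|X_1\|^{2+\delta})^{2/(2+\delta)}+(E\|X_1\|^{2+\delta})^{1/(1+\delta)}\bigr)$.

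I do not expect a genuine obstacle here: once Lemma~\ref{lem1} is available, the proof is a bilinear expansion plus summation. The only point requiring care is the reindexing of the $\lfloor m/3\rfloor$-sums and checking that the low-lag terms and the factor-of-three overcounting are harmlessly absorbed into the constant, and—in the bounded case—that the summation range matches the claimed $\sum_{m=3}^{\lceil n/3\rceil}$. I would verify that the summability hypotheses (2) and (3) are exactly what makes the lag-sums in the moment case converge to a finite multiple of the moment factors, so that no further assumptions are needed.
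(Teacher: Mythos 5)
Your proposal is correct and takes essentially the same route as the paper: the paper proves this lemma by pointing to the argument of Lemma 2.23 of Borovkova, Burton, and Dehling \cite{boro} with Lemma \ref{lem1} as the covariance input, which is exactly your lag-wise bilinear expansion under stationarity, the application of Lemma \ref{lem1} to each lag, and the absorption of the low-lag terms and the factor-of-three reindexing into the constant. The only point worth noting is cosmetic: you read the second term of Lemma \ref{lem1} with exponent $\delta/(1+\delta)$ on $a_{[j/3]}$ (and without the stray factor $K$), which is the form the paper itself uses in the proof of Theorem \ref{theo1} and the one matching hypothesis (2) of the lemma.
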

This lemma can be proved in the same way as Lemma 2.23 of Borovkova et al. \cite{boro}, using our Lemma \ref{lem1} instead of Lemma 2.18 in Borovkova et al. \cite{boro}.

\begin{lemma}\label{lem7}  Let $(X_n)_{n\in\Z}$ be a stationary and 1-approximating functional with approximation constants $(a_m)_{m\in\N}$ of an absolutely regular process with mixing coefficients $(\beta_m)_{m\in\N}$. Assume that $X_i$ is bounded by $K$, $EX_1=0$ and
\begin{equation}
\sum_{m=1}^\infty m(a_m+\beta_m)<\infty.
\end{equation}
Then there exists a constant $C$ such that
\begin{equation}
E\left\|X_1+X_2+\ldots+X_n\right\|^4\leq C K^4n^2.
\end{equation}
\end{lemma}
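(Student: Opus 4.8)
The plan is to expand $E\|S_n\|^4$, where $S_n := X_1+\cdots+X_n$, into a sum over four time indices and to bound each term by the weak dependence, the decisive tool being the coupling of Lemma~\ref{lem1c}, which lets me replace a block of variables by an independent copy. Since $\|S_n\|^2=\langle S_n,S_n\rangle=\sum_{i,j=1}^n\langle X_i,X_j\rangle$, I would start from
\begin{equation*}
E\|S_n\|^4=\sum_{i,j,k,l=1}^n E\big[\langle X_i,X_j\rangle\langle X_k,X_l\rangle\big].
\end{equation*}
By Cauchy--Schwarz each summand is at most $E\|X_1\|^4\le K^4$. Tuples $(i,j,k,l)$ taking at most two distinct values number $O(n^2)$, so they already contribute $O(K^4n^2)$; the task is to show that the tuples with three or four distinct indices contribute $O(K^4n^2)$ as well.

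Write $r(m):=a_{\lfloor m/3\rfloor}+\beta_{\lfloor m/3\rfloor}$. The central estimate I would establish is the following decoupling bound. Suppose one of the four indices is extreme (the smallest or the largest among the distinct values), is separated from the others by a gap $g$, and occurs in exactly one of the two inner products. Then
\begin{equation*}
\big|E[\langle X_i,X_j\rangle\langle X_k,X_l\rangle]\big|\le CK^4 r(g).
\end{equation*}
To prove it I would apply Lemma~\ref{lem1c} across the gap $g$ separating the extreme index from the rest, coupling the variables on one side to an independent copy of the same joint law on a good set $A$ with $P(A^c)\le\beta_{\lfloor g/3\rfloor}$, so that the extreme variable becomes independent of the other three. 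Replacing the coupled variables by their copies alters the product, which is bounded by $K^4$, by at most $CK^4 r(g)$ in expectation, using the two $L^1$-approximation bounds of Lemma~\ref{lem1c} together with the estimate on $P(A^c)$. In the resulting decoupled expectation the extreme variable enters linearly through a single inner product, has mean zero, and is independent of everything else, so that expectation equals $0$; this is exactly where $EX_1=0$ is used.

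With this in hand I would treat the two remaining cases. For four distinct indices, sort them as $n_1\le n_2\le n_3\le n_4$ with gaps $g_1,g_2,g_3$; each variable appears in exactly one inner product, so the decoupling bound applies both to $n_1$ (gap $g_1$) and to $n_4$ (gap $g_3$), giving
\begin{equation*}
\big|E[\langle X_i,X_j\rangle\langle X_k,X_l\rangle]\big|\le CK^4\min\big(r(g_1),r(g_3)\big).
\end{equation*}
Summing over these tuples, the starting position and the middle gap $g_2$ are each free (a factor $n$ apiece), while the two end gaps are controlled by
\begin{equation*}
\sum_{g_1,g_3\ge 1}\min\big(r(g_1),r(g_3)\big)\le 2\sum_{m\ge 1}(m+1)r(m)<\infty,
\end{equation*}
the finiteness being exactly the hypothesis $\sum_m m(a_m+\beta_m)<\infty$; the double-sum inequality follows by splitting into $\{g_1\le g_3\}$ and $\{g_3<g_1\}$, bounding $\min$ by the value at the larger index, and counting the smaller one. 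This yields $O(K^4n^2)$.

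For three distinct indices one value is doubled, and I claim that at least one extreme distinct index always occurs in exactly one inner product; isolating it gives decay $r(g)$ in its adjacent gap. Now only two gaps are present, so with the free starting position and the single remaining free gap (each a factor $n$) and $\sum_m r(m)<\infty$, these tuples also contribute $O(K^4n^2)$. Collecting the three cases gives $E\|S_n\|^4\le CK^4 n^2$. I expect the main obstacle to be the four-distinct case: there, isolating a single extreme leaves two free gaps and overshoots to $O(n^3)$, and it is only the product bound $\min(r(g_1),r(g_3))$ --- whose summability is precisely the strengthened moment condition $\sum_m m(a_m+\beta_m)<\infty$ on the mixing and approximation coefficients, rather than the plain $\sum_m(a_m+\beta_m)<\infty$ of Lemma~\ref{lem6} --- that compensates for the unavoidable free middle gap.
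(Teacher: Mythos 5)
Your proposal is correct and follows essentially the same route as the paper: expand $E\left\|X_1+\ldots+X_n\right\|^4$ into $\sum E\left[\langle X_{i_1},X_{i_2}\rangle\langle X_{i_3},X_{i_4}\rangle\right]$, use the coupling of Lemma \ref{lem1c} to replace an extreme index by an independent mean-zero copy at cost $CK^4\left(a_{\lfloor g/3\rfloor}+\beta_{\lfloor g/3\rfloor}\right)$ in its adjacent gap, and let the decay in the larger outer gap absorb the count of the smaller one, so that the total is $O\!\left(n^2\sum_m m(a_m+\beta_m)\right)$. Your bound by $\min\left(r(g_1),r(g_3)\right)$ is just a repackaging of the paper's reduction to the case $i_2-i_1\geq i_4-i_3$, and your explicit treatment of the repeated-index configurations only spells out cases the paper dismisses with ``treating the other cases in the same way.''
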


\begin{proof} First note that by the linearity of the expectation and by the triangle inequality
\begin{multline}
E\left\|X_1+X_2+\ldots+X_n\right\|^4=\sum_{i_1,i_2,i_3,i_4=1}^n E\left[\langle X_{i_1},X_{i_2}\rangle\langle X_{i_3},X_{i_4}\rangle\right]\\
\leq \sum_{i_1,i_2,i_3,i_4=1}^n\left|E\left[\langle X_{i_1},X_{i_2}\rangle\langle X_{i_3},X_{i_4}\rangle\right]\right|.
\end{multline}
We will develop bounds for the summands. In order to keep the proof short, we will concentrate only on case for the ordering of the indices $i_1,i_2,i_3,i_4$. Assume that $i_1<i_2<i_3<i_4$ and $m:=i_2-i_1\geq i_4-i_3$. With the help of Lemma \ref{lem1c}, we find sequences $(X'_n)_{n\in\Z}$ and $(X''_n)_{n\in\Z}$, such that
\begin{itemize}
\item $(X''_n)_{n\in\Z}$ is independent of $(X_n)_{n\in\Z}$,
\item there is a set $A$ with $P(A)\geq 1-\beta_{\lfloor\frac{m}{3}\rfloor}$,
\item $E\left[\left\|X_i-X_i'\right\|\mathds{1}_A\right]\leq 2a_{\lfloor\frac{m}{3}\rfloor}$ for all $i\geq i_2$,
\item $E\left[\left\|X'_i-X_i''\right\|\right]\leq 2a_{(i-i_1)+\lfloor\frac{m}{3}\rfloor}$ for all $i\geq i_1$.
\end{itemize}
Because $(X''_n)_{n\in\Z}$ and $(X_n)_{n\in\Z}$ are independent, we get
\begin{equation}
E\left[\langle X''_{i_1},X_{i_2}\rangle\langle X_{i_3},X_{i_4}\rangle\right]=E\left[\langle EX''_{i_1},X_{i_2}\rangle\langle X_{i_3},X_{i_4}\rangle\right]=0.
\end{equation}
We can conclude that
\begin{multline}
\left|E\left[\langle X_{i_1},X_{i_2}\rangle\langle X_{i_3},X_{i_4}\rangle\right]\right|=\left|E\left[\langle X'_{i_1},X'_{i_2}\rangle\langle X'_{i_3},X'_{i_4}\rangle\right]-E\left[\langle X''_{i_1},X_{i_2}\rangle\langle X_{i_3},X_{i_4}\rangle\right]\right|\\
\leq \left|E\left[\langle X'_{i_1},X'_{i_2}\rangle\langle X'_{i_3},X'_{i_4}\rangle\right]-E\left[\langle X''_{i_1},X_{i_2}\rangle\langle X'_{i_3},X'_{i_4}\rangle\right]\right|\\
+\left|E\left[\langle X''_{i_1},X_{i_2}\rangle\langle X'_{i_3},X'_{i_4}\rangle\right]-E\left[\langle X''_{i_1},X_{i_2}\rangle\langle X_{i_3},X_{i_4}\rangle\right]\right|\displaybreak[0]\\
\leq  \left|E\left[\langle X'_{i_1}-X''_{i_1},X_{i_2}\rangle\langle X'_{i_3},X'_{i_4}\rangle\right]\right|+\left|E\left[\langle X'_{i_1},X'_{i_2}-X_{i_2}\rangle\langle X'_{i_3},X'_{i_4}\rangle\right]\right|\\
+\left|E\left[\langle X''_{i_1},X_{i_2}\rangle\langle X'_{i_3}-X_{i_3},X'_{i_4}\rangle\right]\right|+\left|E\left[\langle X''_{i_1},X_{i_2}\rangle\langle X_{i_3},X'_{i_4}-X_{i_4}\rangle\right]\right|\displaybreak[0]\\
\leq K^2\Big(E\left|\langle X'_{i_1}-X''_{i_1},X_{i_2}\rangle\right|+E\left|\langle X'_{i_1},X'_{i_2}-X_{i_2}\rangle\right|\\
+E\left|\langle X'_{i_3}-X_{i_3},X'_{i_4}\rangle\right|+E\left|\langle X_{i_3},X''_{i_4}-X_{i_4}\rangle\right|\Big)\displaybreak[0]\\\
\leq K^3\Big(E\left\|X'_{i_1}-X''_{i_1}\right\|+E\left\|X'_{i_2}-X_{i_2}\right\|+E\left\|X'_{i_3}-X_{i_3}\right\|+E\left\|X'_{i_4}-X_{i_4}\right\|\Big).
\end{multline}
By Lemma \ref{lem1c}, we know that $E\left\|X'_{i_1}-X''_{i_1}\right\|\leq 2a_{\lfloor\frac{m}{3}\rfloor}$ and that
\begin{multline}
E\left\|X'_{i_2}-X_{i_2}\right\|=E\left\|X'_{i_2}-X_{i_2}\right\|\mathds{1}_A+E\left\|X'_{i_2}-X_{i_2}\right\|\mathds{1}_{A^C}\\
\leq 2a_{\lfloor\frac{m}{3}\rfloor}+KP(A^C)=2a_{\lfloor\frac{m}{3}\rfloor}+K\beta_{\lfloor\frac{m}{3}\rfloor}.
\end{multline}
The same bound holds for the other two summands, so we can conclude that
\begin{equation}
\left|E\left[\langle X_{i_1},X_{i_2}\rangle\langle X_{i_3},X_{i_4}\rangle\right]\right|\leq CK^4\left(a_{\lfloor\frac{m}{3}\rfloor}+\beta_{\lfloor\frac{m}{3}\rfloor}\right).
\end{equation}
Now a short calculation gives
\begin{multline}
\sum_{\substack{1\leq i_1<i_2<i_3<i_4\leq n\\ i_2-i_1=m,i_4-i_3\leq m}}\left|E\left[\langle X_{i_1},X_{i_2}\rangle\langle X_{i_3},X_{i_4}\rangle\right]\right|\leq \sum_{i_1,i_3=1}^n\sum_{i_4=i_3+1}^{i_3+m}\left|E\left[\langle X_{i_1},X_{i_1+m}\rangle\langle X_{i_3},X_{i_4}\rangle\right]\right|\\
\leq \sum_{i_1,i_3=1}^n\sum_{i_4=i_3+1}^{i_3+m}\left(a_{\lfloor\frac{m}{3}\rfloor}+\beta_{\lfloor\frac{m}{3}\rfloor}\right)\leq Cn^2K^4m\left(a_{\lfloor\frac{m}{3}\rfloor}+\beta_{\lfloor\frac{m}{3}\rfloor}\right)
\end{multline}
and consequently
\begin{multline}
\sum_{\substack{1\leq i_1<i_2<i_3<i_4\leq n\\i_4-i_3\leq i_2-i_1}}\left|E\left[\langle X_{i_1},X_{i_2}\rangle\langle X_{i_3},X_{i_4}\rangle\right]\right|\\
=\sum_{m=1}^\infty\sum_{\substack{1\leq i_1<i_2<i_3<i_4\leq n\\ i_2-i_1=m,i_4-i_3\leq m}}\left|E\left[\langle X_{i_1},X_{i_2}\rangle\langle X_{i_3},X_{i_4}\rangle\right]\right|\\
\leq Cn^2K^4\sum_{m=1}^\infty m\left(a_{\lfloor\frac{m}{3}\rfloor}+\beta_{\lfloor\frac{m}{3}\rfloor}\right)=Cn^2K^4.
\end{multline}
Treating the other cases for the ordering of the indices $i_1,i_2,i_3,i_4$ in the same way will lead to the statement of the lemma.
\end{proof}

\begin{lemma}\label{theoa1} Let $X$ be a separable metric space. Then we can construct an equivalent metric in $X$ such that there exists a sequence of bounded uniformly continuous functions $(g_i)_{i\in\N}$ with the following property: for any sequence $\mu_n$ of measures, $\mu_n\Rightarrow\mu$ if and only if for all $i\in\N$
\begin{equation}
\int g_i d\mu_n\rightarrow \int g_i d\mu\ \ \ \text{as} \ \ n\rightarrow\infty.
\end{equation}
\end{lemma}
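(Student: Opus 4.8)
The plan is to reduce the statement to the separability of $C(K)$ for a compact metric space $K$, by first replacing the given metric with an equivalent one that is totally bounded. Since $X$ is separable metric, it is second countable, so by the Urysohn embedding theorem there is a homeomorphism $\phi$ of $X$ onto a subset of the Hilbert cube $[0,1]^\N$, the latter carrying the product metric $d((x_l)_{l\in\N},(y_l)_{l\in\N})=\sum_{l=1}^\infty 2^{-l}|x_l-y_l|$. I would then set $\rho(x,y):=d(\phi(x),\phi(y))$, which is a metric on $X$ topologically equivalent to the original one, because $\phi$ is a homeomorphism. The crucial gain is that $(X,\rho)$ is totally bounded, being isometric to a subset of the compact space $[0,1]^\N$; hence its completion $\widehat X$ (realizable as the closure $\overline{\phi(X)}$ in $[0,1]^\N$) is a compact metric space into which $X$ embeds isometrically as a dense subset.

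Next I would exploit the separability of $C(\widehat X)$: for a compact metric space the Banach space of continuous functions with the supremum norm is separable. Let $(g_i)_{i\in\N}$ be a countable dense subset of $C(\widehat X)$, regarded as functions on $X$ by restriction. Each $g_i$ is then bounded and, being uniformly continuous on the compact space $\widehat X$, uniformly continuous on $(X,\rho)$. The whole point of the re-metrization is that restriction is an isometric identification of $C(\widehat X)$ with the space $U_b(X,\rho)$ of bounded $\rho$-uniformly continuous functions: every such function extends uniquely and continuously to the completion $\widehat X$, and the supremum over the dense set $X$ equals the supremum over $\widehat X$. Consequently $(g_i)_{i\in\N}$ is dense in all of $U_b(X,\rho)$.

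Finally I would verify the stated equivalence for probability measures $\mu_n,\mu$ (the relevant case in this paper). The direction ``$\mu_n\Rightarrow\mu$ implies $\int g_i\,d\mu_n\to\int g_i\,d\mu$'' is immediate, since weak convergence is a topological notion, unchanged by passing to the equivalent metric $\rho$, and each $g_i$ is bounded and continuous. For the converse, given any bounded $\rho$-uniformly continuous $f$ and $\varepsilon>0$, density furnishes an index $i$ with $\|f-g_i\|_\infty<\varepsilon$, and since $\mu_n,\mu$ are probability measures the triangle inequality gives
\begin{equation*}
\Big|\int f\,d\mu_n-\int f\,d\mu\Big|\leq 2\varepsilon+\Big|\int g_i\,d\mu_n-\int g_i\,d\mu\Big|.
\end{equation*}
The hypothesis makes the last term vanish as $n\to\infty$, so $\limsup_n|\int f\,d\mu_n-\int f\,d\mu|\leq 2\varepsilon$; letting $\varepsilon\to 0$ yields convergence for every bounded $\rho$-uniformly continuous $f$. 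By the portmanteau theorem (for the metric $\rho$) this is exactly $\mu_n\Rightarrow\mu$.

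The main obstacle is the re-metrization step itself. For a general separable metric space the space $U_b(X,d)$ need not be separable — on $\R$ with its usual metric one produces an uncountable $1$-separated family of bounded uniformly continuous functions via fixed-shape bumps placed on an arbitrary subset of $\Z$ — so no approximation argument of the above kind is available directly. Passing to the equivalent totally bounded metric $\rho$, which renders the completion compact and $U_b(X,\rho)\cong C(\widehat X)$ separable, is precisely what produces a countable convergence-determining family.
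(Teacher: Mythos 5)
Your proposal is correct and follows essentially the paper's own route: the paper proves this lemma simply by invoking the proof of Theorem 3.1 of Varadarajan \cite{vara}, and that proof is precisely the argument you reconstruct --- embed $X$ in the Hilbert cube to obtain an equivalent totally bounded metric, pass to the compact completion $\widehat X$, use separability of $C(\widehat X)\cong U_b(X,\rho)$ to extract a countable convergence-determining family, and conclude via the portmanteau theorem for bounded uniformly continuous functions. The only (harmless) specialization is that you verify the equivalence for probability measures rather than general finite measures, which is exactly the case the paper uses; the general case follows by testing against the constant function $1$ and normalizing.
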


This follows from the proof of Theorem 3.1 of Varadarajan \cite{vara}.

\begin{lemma}\label{lem8} Let $H_1$ and $H_2$ be Hilbert spaces and $g:H_1\rightarrow H_2$ be uniformly continuous. Then for any $\delta>0$ there exists a Lipschitz-continuous mapping $g_\delta$ (with Lipschitz-constant $L_\delta$ depending on $\delta$) such that
\begin{equation}
\sup_{x\in H_1}\left\|g(x)-g_\delta(x)\right\|\leq \delta.
\end{equation}
\end{lemma}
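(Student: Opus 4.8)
The plan is to use Lipschitz regularization (inf-convolution), the standard device for replacing a uniformly continuous function by a nearby Lipschitz one. First I would record the modulus of continuity $\omega(t):=\sup_{\|x-y\|\le t}\|g(x)-g(y)\|$, which satisfies $\omega(t)\to 0$ as $t\to 0^+$ by uniform continuity and is subadditive, $\omega(s+t)\le\omega(s)+\omega(t)$. The subadditivity uses that $H_1$ is a Hilbert space, hence a normed (convex) space, so any two points can be joined by a segment and an intermediate point splitting the distance can be inserted. Subadditivity yields the linear majorant $\omega(t)\le\omega(\eta)(1+t/\eta)$. Fixing $\eta>0$ with $\omega(\eta)\le\delta$ and putting $L:=\delta/\eta$, we obtain $\omega(t)\le\delta+Lt$ for all $t\ge 0$.

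For a real-valued target (the case actually needed, since the functions $g_i$ of Lemma \ref{theoa1} are real-valued test functions) I would then set
\[
g_\delta(x):=\inf_{y\in H_1}\bigl(g(y)+L\|x-y\|\bigr).
\]
The infimum is finite because the linear growth of $\omega$ forces $g(y)+L\|x-y\|\ge g(x)-\delta$, and three routine verifications finish the argument: (i) $g_\delta$ is $L$-Lipschitz, being an infimum of the $L$-Lipschitz functions $x\mapsto g(y)+L\|x-y\|$; (ii) $g_\delta\le g$, by taking $y=x$; and (iii) $g(x)-g_\delta(x)=\sup_y\bigl(g(x)-g(y)-L\|x-y\|\bigr)\le\sup_{t\ge0}(\omega(t)-Lt)\le\delta$. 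Together these give $\sup_x|g(x)-g_\delta(x)|\le\delta$ with Lipschitz constant $L_\delta=L=\delta/\eta$.

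For a finite-dimensional target $H_2$ the same construction applied coordinatewise works after absorbing a dimensional factor into $L_\delta$ and into the approximation error. The genuinely delicate point, and the step I expect to be the main obstacle, is the general infinite-dimensional Hilbert target, where the scalar inf-convolution has no vector analogue (there is nothing of which to take an infimum). The natural substitute is a convex-combination / partition-of-unity approximant $g_\delta(x)=\sum_i\psi_i(x)\,g(x_i)$ over an $\eta$-net $\{x_i\}$, which automatically takes values in $H_2$ and lies within $\omega(2\eta)$ of $g$; the difficulty is that an infinite-dimensional Hilbert space is not metrically doubling, so a ball of radius $2\eta$ may contain infinitely many $\eta$-separated net points. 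Consequently a single-scale partition of unity need not even be well defined, let alone uniformly Lipschitz, and one must pass to a multiscale (Whitney-type) construction to secure a uniform Lipschitz constant. Since the lemma is applied in the sequel only to the real-valued functions furnished by Lemma \ref{theoa1}, the scalar inf-convolution above already suffices for all subsequent uses.
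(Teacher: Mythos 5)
The paper offers no proof of this lemma at all: it simply cites Corollary~2 of Levy and Rice \cite{levy}, which supplies the statement for uniformly continuous mappings with a Hilbert space target. Your self-contained inf-convolution argument is therefore a genuinely different route, and for the scalar case $H_2=\R$ it is correct and complete: subadditivity of the modulus $\omega$ is legitimate because a normed space is metrically convex, the linear majorant $\omega(t)\le\delta+Lt$ with $L=\delta/\eta$ follows, and the three verifications for $g_\delta(x)=\inf_{y}\left(g(y)+L\left\|x-y\right\|\right)$ (that $g_\delta$ is $L$-Lipschitz, that $g_\delta\le g$, and that $g-g_\delta\le\sup_{t\ge 0}(\omega(t)-Lt)\le\delta$) are all accurate. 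You are also right about how the lemma is actually used: in the proof of Theorem~\ref{theo2} it is applied only to bounded, uniformly continuous real-valued functions $h:H\rightarrow\R$ (the test functions coming from Lemma~\ref{theoa1} are likewise real-valued), so the scalar case carries all the weight in this paper, and your proof buys something the paper does not have, namely an elementary argument with an explicit Lipschitz constant in place of an external citation.

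That said, as a proof of the lemma \emph{as stated} --- with an arbitrary Hilbert space target $H_2$ --- your argument has a gap, one you flag yourself: inf-convolution has no vector-valued analogue, and your sketched partition-of-unity substitute correctly diagnoses why a single-scale $\eta$-net fails in an infinite-dimensional domain (no doubling, hence no uniform bound on overlapping net points) but does not resolve the difficulty. That missing case is exactly what the cited Levy--Rice Corollary~2 provides. So the honest summary is: your proof establishes the lemma in the generality the paper needs, but not in the generality the paper asserts; to close the gap one must either restate the lemma for real-valued $g$ or retain the citation for the vector-valued case.
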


This is Corollary 2 of Levy and Rice \cite{levy}.

\section{\label{sec3} Proofs of Main Results}

In our proofs, we denote by $C$ a constant which may depend on several parameters (but not on $n\in\N$) and might have different values even in one chain of inequalities.

\begin{proof}[Proof of Theorem \ref{theo1}] Without loss of generality, we can assume that $EX_i=0$. Otherwise replace $X_i$ by $X_i-EX_i$. Set $S_n:=n^{-1/2}(X_1+X_2+\ldots+X_n)$ and note that it is enough to prove that
\begin{enumerate}
\item $\left(\langle a,S_n\rangle\right)_{n\in\N}$ satisfies the central limit theorem in $\R$ for any $a\in H$,
\item for any $\epsilon>0$ there exists a $d\in\N$ and a sequence $(X_{nd})_{n\in\N}$ of random variables taking values in a $d$-dimensional subspace of $H$ such that
\begin{equation}
S_{nd}\Rightarrow N_d
\end{equation}
as $n\rightarrow\infty$, where $N_d$ is a Gaussian random variable and
\begin{equation}
\sup_{n\in\N}E\left\|S_n-S_{nd}\right\|<\epsilon
\end{equation}
with $S_{nd}=n^{-1/2}(X_{1d}+X_{2d}+\ldots+X_{nd})$,
\end{enumerate}
see Ledoux and Talagrand \cite{ledo}. Note that $\left(Y_n\right)_{n\in\N}$ with $Y_i=\langle a,X_i\rangle$ is a real-valued 1-approximating functional of $(\xi_n)_{n\in\Z}$ with approximation constants $(\left\|a\right\|a_m)_{m\in\N}$ such that $EY_1=0$ and $E\left|Y_i\right|^{2+\delta}<\infty$. Lemma \ref{lem2} implies the central limit theorem for $\left(Y_n\right)_{n\in\N}$.

It remains to prove 2. Let $\left\{e_l\big|l\in\N\right\}$ be an orthonormal basis of $H$ so we have the representation
\begin{equation}
S_n=\sum_{l=1}^\infty \langle S_n,e_l\rangle e_l=\sum_{l=1}^\infty S_n^{(l)}e_l.
\end{equation}
with $S_n^{(l)}=n^{-1/2}\sum_{i=1}^n\langle X_i,e_l\rangle$. As a sequence $(X_{nd})_{n\in\N}$ of finite dimensional random variables, we take the $d$-dimensional projections
\begin{align}
X_{nd}&:=\sum_{l=1}^d \langle X_n,e_l\rangle e_l,\\
S_{nd}&:=\frac{1}{\sqrt{n}}\sum_{i=1}^nX_{id}
\end{align}
and we denote the projections on the orthogonal complement as
\begin{align}
\bar{X}_{nd}&:=X_n-X_{nd}\\
\bar{S}_{nd}&:=S_n-S_{nd}=\frac{1}{\sqrt{n}}\sum_{i=1}^n\bar{X}_{id}.
\end{align}
To prove the asymptotic normality of $S_{nd}$, we can use the Cramer-Wold device. By Lemma \ref{lem1b}, we can conclude that that the $\R$-valued sequence given by the linear combination of the coordinates of the random variables $(X_{nd})_{n\in\N}$ form a 1-approximating sequence, so by Lemma \ref{lem2}, we have
\begin{equation}
S_{nd}\Rightarrow N_d\ \ \ \text{as} \ n\rightarrow\infty
\end{equation}
where $N_d$ is a Gaussian random variable with with mean zero and covariance operator $V_d$ defined by
\begin{equation}
\langle V_dx,y\rangle=\sum_{j=-\infty}^\infty E\langle X_{0d},x\rangle \langle X_{jd},y\rangle
\end{equation}
for all $x,y\in H$. It remains to prove that for any positive $\epsilon$ there exists a $d\in\N$ such that we can approximate the partial sum $S_n$ by $S_{nd}$, that means
\begin{equation}
\sup_{n\in\N}E\left\|S_n-S_{nd}\right\|<\epsilon.\label{clt1}
\end{equation}
Using the covariance inequality from Lemma \ref{lem1} and the stationarity of the process, we have the following upper bound for the expectation of this difference:
\begin{multline}
\sup_{n\in\N}E\left\|S_n-S_{nd}\right\|\leq \sup_{n\in\N}\sqrt{E\left\|S_n-S_{nd}\right\|^2}\\
\leq \sup_{n\in\N} \left(E\left\|\bar{X}_{0d}\right\|^2+C_1\left(E\left\|\bar{X}_{0d}\right\|^{2+\delta}\right)^{\frac{2}{2+\delta}}+C_2\left(E\left\|\bar{X}_{0d}\right\|^{2+\delta}\right)^{\frac{1}{1+\delta}}\right)^{1/2}\displaybreak[0]\\
\leq \left(E\left\|\bar{X}_{0d}\right\|^2\right)^{1/2}+\sqrt{C_1}\left(E\left\|\bar{X}_{0d}\right\|^{2+\delta}\right)^{\frac{1}{2+\delta}}+\sqrt{C_2}\left(E\left\|\bar{X}_{0d}\right\|^{2+\delta}\right)^{\frac{1}{2+2\delta}},
\end{multline}
where the constants $C_1=4\sum_{m=1}^\infty\beta_{[m/3]}^{\delta/(2+\delta)}$ and $C_2=8\sum_{m=1}^\infty a_{[m/3]}^{\delta/(1+\delta)}$ do not depend on $n$. Since $E\left\|X_1\right\|^{2+\delta}<\infty$, we can choose for any $\epsilon>0$ a $d\in\N$ so big that (\ref{clt1}) holds, so the proof of (\ref{theo1l1}) (asymptotic normality) is completed. (\ref{theo1l2}) follows in the same way as Lemma 2.23 of Borovkova et al., making use of the stationarity:
\begin{multline}
E\left\|\frac{1}{\sqrt{n}}\sum_{i=1}^n X_i\right\|^2=\frac{1}{n}\sum_{j=-n}^n(n-j)E\langle X_0,X_j\rangle=\sum_{j=-n}^n\frac{n-j}{n}E\langle X_0,X_j\rangle\\
\rightarrow\sum_{j=-\infty}^\infty E\langle X_0,X_j\rangle=\sum_{j=-\infty}^\infty\sum_{l=1}^\infty E\left[\langle X_0,e_l\rangle\langle X_j,e_l\rangle\right]=\sum_{l=1}^\infty \langle Ve_l,e_l\rangle\\
=\sum_{l=1}^\infty E\left[\langle N_1,e_l\rangle\langle N_1,e_l\rangle\right]=E\left\|N_1\right\|^2
\end{multline}
as $n\rightarrow\infty$ by the dominated convergence theorem and the fact that $\sum_{j=-\infty}^\infty|E\langle X_0,X_j\rangle|<\infty$ by Lemma \ref{lem1}.

\end{proof}

\begin{proof}[Proof of Theorem \ref{theo2}] Without loss of generality, we can assume that $EX_i=0$. We introduce the following notation for the normalized bootstrap block sums:
\begin{align}
S_{ni}^\star&:=\frac{1}{\sqrt{p}}\sum_{j\in B_i}\left(X_j^\star-\bar{X}_{kp}\right),\\
S_{ni}&:=\frac{1}{\sqrt{p}}\sum_{j\in B_i}\left(X_j-\bar{X}_{kp}\right).
\end{align}
We will prove the theorem using Mallow's metric which is defined for random variables $X$ and $Y$ with distributions $\mu_X$ respectively $\mu_Y$ as the minimal $L^2$-distance, that is
\begin{equation}
m(\mu_X,\mu_Y):=\inf_{\mu_{(X,Y)}}\left(E\left\|X-Y\right\|^2\right)^{1/2},
\end{equation}
where the infimum is taken over all distributions $\mu_{(X,Y)}$ with marginals $\mu_X$ and $\mu_Y$. For convenience, we will write $m(X,Y)$ instead of $m(\mu_X,\mu_Y)$. Note that convergence in Mallow's metric is equivalent to both convergence in distribution and convergence of the second moments (see Bickel and Freedman \cite{bick}). We will use the following property of Mallow's metric:
\begin{equation}
m^2\left(\frac{1}{\sqrt{n}}\sum_{i=1}^n Z_i,\frac{1}{\sqrt{n}}\sum_{i=1}^n Y_i \right)\leq m^2\left(Z_i,Y_i\right),
\end{equation}
where $(Z_i)_{i\in\N}$ and $(Y_i)_{i\in\N}$ are independent identically distributed random variables. Using this property, we have
\begin{equation}
m^2\left(\sqrt{kp}(\bar{X}_{n}^\star-\bar{X}_{kp}),N_1\right)= m^2\left(\frac{1}{\sqrt{k}}\sum_{i=1}^kS_{ni}^\star,\frac{1}{\sqrt{k}}\sum_{i=1}^k N_i\right)\leq m^2\left(S^\star_{n1},N_1\right),
\end{equation}
where $\left(N_i\right)_{i\in\N}$ is a sequence of independent Gaussian random variables with the distribution $N(0,V)$. Now in order to prove the theorem, it is enough to show that
\begin{equation}
S^{\star}_{n1}\Rightarrow^\star N(0,V)\ \ \ \text{almost surely}\label{boot1}
\end{equation}
and
\begin{equation}
E^\star\left\|S^\star_{n1}\right\|^2\Rightarrow E\left\|N_1\right\|^2\label{boot2}
\end{equation}
almost surely as $n\rightarrow\infty$. First we will prove that almost surely
\begin{equation}
\frac{1}{k}\sum_{i=1}^kh(S_{ni})=E^\star\left[h(S_{n1}^\star)\right]\rightarrow Eh(N_1)=\int_H hd{\mu_{N_1}}\ \ \ \text{as} \ \ n\rightarrow \infty
\end{equation}
for any bounded uniformly continuous function $h:H\rightarrow\R$. We define the modulus of continuity $\epsilon(\delta)$ of $h$ in the usual way as
\begin{equation}
\epsilon(\delta):=\sup_{x,y:\ \left\|x-y\right\|\leq \delta}\left|h(x)-h(y)\right|.
\end{equation}
As $h$ is uniformly continuous, $\epsilon(\delta)\rightarrow 0$ as $\delta\rightarrow 0$. By the triangle inequality, we have that
\begin{multline}
\left|\frac{1}{k}\sum_{i=1}^kh(S_{ni})-Eh(N_1)\right|\\
= \bigg|\frac{1}{k}\sum_{i=1}^k\bigg(h(\frac{1}{\sqrt{p}}\sum_{j\in B_i}(X_j-\bar{X}_{kp}))-h(\frac{1}{\sqrt{p}}\sum_{j\in B_i}X_j)\bigg)\\
+\frac{1}{k}\sum_{i=1}^kh(\frac{1}{\sqrt{p}}\sum_{j\in B_i}X_j)-E(h(\frac{1}{\sqrt{p}}\sum_{j\in B_1}X_j))+E(h(\frac{1}{\sqrt{p}}\sum_{j\in B_1}X_j))-Eh(N_1)\bigg|\displaybreak[0]\\
\leq \frac{1}{k}\left|\sum_{i=1}^kh(\frac{1}{\sqrt{p}}\sum_{j\in B_i}(X_j-\bar{X}_{kp}))-h(\frac{1}{\sqrt{p}}\sum_{j\in B_i}X_j)\right|\\
+\frac{1}{k}\left|\sum_{i=1}^kh(\frac{1}{\sqrt{p}}\sum_{j\in B_i}X_j)-E(h(\frac{1}{\sqrt{p}}\sum_{j\in B_1}X_j))\right|\\
+\left|E(h(\frac{1}{\sqrt{p}}\sum_{j\in B_1}X_j))-Eh(N_1)\right|=I_n+I\! I_n+I\! I\! I_n
\end{multline}
We will treat this three summands separately. For the last summand, note that by Theorem \ref{theo1}, we have asymptotic normality of $\frac{1}{\sqrt{p}}\sum_{j\in B_1}X_j$, so $I\! I\! I_n\rightarrow 0$ as $n\rightarrow \infty$. Concerning $I_n$ note that
\begin{equation}
I_n\leq \epsilon\left(\sqrt{p}\left\|\bar{X}_{kp}\right\|\right)
\end{equation}
(with modulus of continuity $\epsilon$) and by Lemma \ref{lem5} the strong law of large numbers holds that implies $I_n\rightarrow 0$. It remains to prove that almost surely $I\! I_n\rightarrow 0$. We will use the fact that the function $h$ can be approximated by a Lipschitz-continuous function $h_\delta$ with Lipschitz-constant $L=L_{\delta}$ in such a way that for any $\delta>0$
\begin{equation}
\sup_{x\in H}\left|h(x)-h_\delta(x)\right|\leq\delta,
\end{equation}
see Lemma \ref{lem8}. We conclude that we have the following bound for the second summand:
\begin{equation}
I\! I_n\leq 2\delta+\frac{1}{k}\left|\sum_{i=1}^kh_\delta(\frac{1}{\sqrt{p}}\sum_{j\in B_i}X_j)-E(h_\delta(\frac{1}{\sqrt{p}}\sum_{j\in B_1}X_j))\right|=2\delta +I\! I'_n
\end{equation}
Note that the sequence $(\eta_i)_{i\in\N}$ with $\eta_i=h_\delta(\frac{1}{\sqrt{p}}\sum_{j\in B_i}X_i)$ is an approximating functional of the absolutely regular sequence $\left((\xi_j)_{j\in B_i}\right)_{i\in\Z}$ with mixing coefficients
\begin{equation}
\tilde{\beta}_m=\beta_{(m-1)p}.
\end{equation}
Because of the Lipschitz-continuity of $h_\delta$, the sequence $(\eta_i)_{i\in\N}$ has the approximation constants
\begin{equation}
\tilde{a}_m=L\sqrt{p}a_{(m-1)p}.
\end{equation}
Using the fact that $h_\delta$ is bounded by some constant $K$ and (\ref{line46}), we obtain for $k_1<k_2\leq k_{2^l}$
\begin{multline}
E\left(\sum_{i=k_1}^{k_2}\left(h_\delta(\frac{1}{\sqrt{p}}\sum_{j\in B_i}X_j)-E(h_\delta(\frac{1}{\sqrt{p}}\sum_{j\in B_1}X_j))\right)\right)^2\\
\leq C(k_2-k_1)\left(K^2+\sum_{m=3}^{\lceil\frac{k_2-k_1}{3}\rceil}KL\sqrt{p}a_{(m-1)p}+\sum_{m=3}^{\lceil\frac{k_2-k_1}{3}\rceil}K^2\beta_{(m-1)p}\right)\leq C(k_2-k_1)
\end{multline}
as $\sum_{m=3}^{\infty}\sqrt{p}a_{(m-1)p}<\sum_{m=1}^\infty m a_m<\infty$. So the assumptions of Lemma \ref{lem4} hold and we can apply (\ref{line39}) to obtain
\begin{multline}
P\left(\max_{n=2^{l-1}+1,\ldots,2^l}I\! I'_n>\epsilon\right)\\
=P\left(\max_{n=2^{l-1}+1,\ldots,2^l}\frac{1}{k}\left|\sum_{i=1}^kh_\delta(\frac{1}{\sqrt{p}}\sum_{j\in B_i}X_j)-kE(h_\delta(\frac{1}{\sqrt{p}}\sum_{j\in B_1}X_j))\right|>\epsilon\right)\displaybreak[0]\\
\leq \frac{1}{k_{2^{l-1}}^2\epsilon^2}E\left(\max_{n=2^{l-1}+1,\ldots 2^l}\sum_{i=1}^k h_\delta(\frac{1}{\sqrt{p}}\sum_{j\in B_i}X_j)-kE(h_\delta(\frac{1}{\sqrt{p}}\sum_{j\in B_1}X_j))\right)^2\\
\leq C \frac{1}{k_{2^{l-1}}^2\epsilon^2}k_{2^l}l^2.
\end{multline}
Now by our assumptions on the block length, $p=O(n^{1-c_1})$, so $k_{2^l}\leq 2k_{2^{l-1}}$ and $k_{2^{l-1}}\geq C 2^{lc_1}$ and $\sum_{l=1}^\infty P\left(\max_{n=2^{l-1}+1,\ldots,2^l}I\! I'_n>\epsilon\right)<\infty$ and the Borel-Cantelli-Lemma implies that $P\left(\max_{n=2^{l-1}+1,\ldots,2^l}I\! I'_n>\epsilon \ \ \text{infinitely often}\right)=0$. We can conclude that $I\! I'_n\rightarrow 0$ almost surely as $n\rightarrow\infty$. Thus we have proved that for any bounded, uniformly continuous $h$ almost surely
\begin{equation}
E^\star\left[h(S^\star_{n,1})\right]\rightarrow E\left[h(N_1)\right]\ \ \ \text{as}\ \ n\rightarrow\infty.\label{boot4}
\end{equation}
We can find a countable set of bounded and uniformly continuous functions $(f_i)_{i\in\N}$ with $f_i: H\rightarrow \R$ such that the properties of Lemma \ref{theoa1} hold. Then for all $i\in\N$, $f_i$ satisfies (\ref{boot4}) almost surely, that means there exists a set $N_{f_i}$ with $P(N_{f_i})=0$ and  $E^\star[f_i(S^\star_{n,1})]\rightarrow E[f_i(N_1)]$ for all $\omega\in\Omega\setminus N_{f_i}$. We set $N=\bigcup_{i=1}^\infty N_{f_i}$ and observe that $P(N)=0$ and for all $\omega\in\Omega\setminus N$
\begin{equation}
E^\star\left[f_i(S^\star_{n,1})\right]\rightarrow E\left[f_i(N_1)\right]\ \ \ \text{as}\ \ n\rightarrow\infty.
\end{equation}
for each $i\in\N$. Thus Lemma \ref{theoa1} implies (\ref{boot1}). Now we will prove (\ref{boot2}). First note that by the construction of the bootstrap sample
\begin{multline}
E^\star\left\|S^\star_{n1}\right\|^2=\frac{1}{kp}\sum_{i=1}^k\bigg\|\sum_{j\in B_i}(X_j-\bar{X}_{kp})\bigg\|^2\displaybreak[0]\\
=\frac{1}{kp}\sum_{i=1}^k\bigg(\bigg\|\sum_{j\in B_i}X_j\bigg\|^2-E\bigg\|\sum_{j\in B_i}X_j\bigg\|^2\bigg)+\frac{1}{p}E\bigg\|\sum_{j\in B_1}X_j\bigg\|^2-p\left\|\bar{X}_{kp}\right\|^2\\
=\tilde{I}_n+\tilde{I\! I}_n+\tilde{I\! I\! I}_n.
\end{multline}
Theorem \ref{theo1} implies for the second summand
\begin{equation}
\tilde{I\! I}_n=\frac{1}{p}E\bigg\|\sum_{j\in B_1}X_j\bigg\|^2\rightarrow E\left\|N_1\right\|^2
\end{equation}
as $n\rightarrow\infty$, and the strong law of large numbers (Lemma \ref{lem5}) implies the convergence of the last summand, as
\begin{equation}
\tilde{I\! I\! I}_n=p\left\|\bar{X}_{kp}\right\|^2\leq C\bigg(n^{-\frac{1}{2}-\frac{c_1}{2}}\Big\|\sum_{i=1}^{kp}X_i\Big\|\bigg)^2\rightarrow 0 \ \ \ \text{as} \ \ n\rightarrow\infty
\end{equation}
almost surely. It remains to prove the almost sure convergence of the last part:
\begin{equation}
\tilde{I}_n=\frac{1}{kp}\sum_{i=1}^k\bigg(\bigg\|\sum_{j\in B_i}X_j\bigg\|^2-E\bigg\|\sum_{j\in B_i}X_j\bigg\|^2\bigg)\rightarrow 0\ \ \ \text{as} \ \ n\rightarrow\infty.
\end{equation}
As we want to make use of results for bounded random sequences, we have to truncate the random variables $(X_i)_{i\in\N}$. We define for $K>0$ the 1-Lipschitz-continuous trimming function $\Phi_K:H\rightarrow H$ with
\begin{equation}
\Phi_K(x)=\begin{cases} x \ \ \text{for} \ \ \left\|x\right\|\leq K\\ \frac{Kx}{\left\|x\right\|} \ \ \text{for} \ \ \left\|x\right\| >K.\end{cases}
\end{equation}
We will choose $K=K_l=2^{\eta l}$ for an $\eta>0$ to be defined later. Let $\tilde{\Phi}_K(x):=\Phi_K(x)-E\Phi(X_1)$. As $\delta'<\delta$ by our assumptions, we have
\begin{multline}
E\left\|X_i-\tilde{\Phi}_K(X_i)\right\|^{2+\delta'}
\leq 
2^{1+\delta'}E\left\|X_i-\Phi_K(X_i)\right\|^{2+\delta'}
+2^{1+\delta'}\left\|E(X_i-\Phi_K(X_i))\right\|^{2+\delta'}\\
\leq CE\left\|X_i-\Phi_K(X_i)\right\|^{2+\delta'}\leq C K^{\delta'-\delta}E\left\|X_i\right\|^{2+\delta}\leq CK^{\delta'-\delta}.
\end{multline}
Furthermore, the bounded sequence $(\Phi_K(X_n))_{n\in\N}$ is 1-approximating with the same approximation constants as the original sequence $(X_n)_{n\in\N}$ because of the Lipschitz-continuity of $\Phi_K$. So by Lemma \ref{lem6}
\begin{equation}
E\bigg\|\sum_{j\in B_i}\left(X_j-\tilde{\Phi}_K(X_j)\right)\bigg\|^2\leq CpK^{\frac{\delta'-\delta}{1+\delta}}\label{boot3}
\end{equation}
and we can conclude with the help of Lemma \ref{lem4} that
\begin{multline}
E\bigg(\max_{n=2^{l-1}+1,\ldots,2^l}\frac{1}{kp}\sum_{i=1}^k\bigg\|\sum_{j\in B_i}\left(X_j-\tilde{\Phi}_K(X_j)\right)\bigg\|^2\bigg)\\
\leq \frac{1}{k_{2^{l-1}}}\sum_{i=1}^{k_{2^l}}\frac{1}{p}E\bigg\|\sum_{j\in B_i}\left(X_j-\tilde{\Phi}_K(X_j)\right)\bigg\|^2\leq CK^{\frac{\delta'-\delta}{1+\delta}}=C2^{l\eta \frac{\delta'-\delta}{1+\delta}}.
\end{multline}
Note that $\eta \frac{\delta'-\delta}{1+\delta}<0$, so
\begin{equation}
\sum_{l=1}^\infty P\bigg(\max_{n=2^{l-1}+1,\ldots,2^l}\frac{1}{kp}\sum_{i=1}^k\bigg\|\sum_{j\in B_i}\left(X_j-\tilde{\Phi}_K(X_j)\right)\bigg\|^2>\epsilon\bigg)\leq \sum_{l=1}^\infty \frac{C}{\epsilon^2}2^{l\eta \frac{\delta'-\delta}{1+\delta}}<\infty
\end{equation}
and the Borel-Cantelli-Lemma implies that $\frac{1}{kp}\sum_{i=1}^k\left\|\sum_{j\in B_i}\left(X_j-\tilde{\Phi}_K(X_j)\right)\right\|^2\rightarrow 0$ almost surely and consequently
\begin{multline}
\left|\sqrt{E^\star\bigg\|S_{n1}^\star\bigg\|^2}-\sqrt{\frac{1}{kp}\sum_{i=1}^k\bigg\|\sum_{j\in B_i}\tilde{\Phi}_K(X_j)\bigg\|^2}\right|\\
=\left|\sqrt{\frac{1}{kp}\sum_{i=1}^k\bigg\|\sum_{j\in B_i}X_j\bigg\|^2}-\sqrt{\frac{1}{kp}\sum_{i=1}^k\bigg\|\sum_{j\in B_i}\tilde{\Phi}_K(X_j)\bigg\|^2}\right|\\
\leq \sqrt{\frac{1}{kp}\sum_{i=1}^k\bigg\|\sum_{j\in B_i}X_j-\sum_{j\in B_i}\tilde{\Phi}_K(X_j)\bigg\|^2}\rightarrow 0
\end{multline}
almost surely and thus
\begin{equation}
E^\star\left\|S_{n1}^\star\right\|^2-\frac{1}{kp}\sum_{i=1}^k\bigg\|\sum_{j\in B_i}\tilde{\Phi}_K(X_j)\bigg\|^2\rightarrow 0.
\end{equation}
We obtain
\begin{multline}
\left|\sqrt{E\frac{1}{p}\bigg\|\sum_{j\in B_1}\tilde{\Phi}_K(X_j)\bigg\|^2}-\sqrt{E\frac{1}{p}\bigg\|\sum_{j\in B_1}X_j\bigg\|^2}\right|
\leq \sqrt{E\frac{1}{p}\bigg\|\sum_{j\in B_1}\tilde{\Phi}_K(X_j)-X_j\bigg\|^2}\rightarrow 0
\end{multline}
using (\ref{boot3}). So instead of proving $\tilde{I}_n\rightarrow 0$, it suffices to show that
\begin{equation}
\frac{1}{kp}\sum_{i=1}^k\bigg\|\sum_{j\in B_i}\tilde{\Phi}_K(X_j)\bigg\|^2-E\frac{1}{kp}\sum_{i=1}^k\bigg\|\sum_{j\in B_i}\tilde{\Phi}_K(X_j)\bigg\|^2\rightarrow 0
\end{equation}
almost surely. The sequence $(\frac{1}{\sqrt{p}}\sum_{j\in B_i}\tilde{\Phi}_K(X_j))_{i\in\Z}$ is an approximating functional of the absolutely regular sequence $\left((\xi_j)_{j\in B_i}\right)_{i\in\Z}$ with approximating constants $a'_m=\sqrt{p}a_{(m-1)p}$.

The random variables $\left\|\frac{1}{\sqrt{p}}\sum_{j\in B_i}\tilde{\Phi}_K(X_j)\right\|^2$ are bounded by $4pK^2$. As the mapping $x\rightarrow\left\|x\right\|^2$ is Lipschitz continuous with constant $8pK^2$ for arguments bounded by $2pK^2$, it follows that $\Big(\big\|\frac{1}{\sqrt{p}}\sum_{j\in B_i}\tilde{\Phi}_K(X_j)\big\|^2\Big)_{i\in\N}$ forms a sequence of approximating functionals with mixing coefficients
\begin{equation}
\bar{\beta}_k=\beta_{(k-1)p}
\end{equation}
and approximation constants
\begin{equation}
\bar{a}_k=8p^{\frac{3}{2}}K^2 a_{(k-1)p}.
\end{equation}
By Lemma \ref{lem7}, $E\left\|\sum_{j\in B_i}\tilde{\Phi}_K(X_j)\right\|^4\leq CK^4p^2$, and we now obtain with the help of Lemma \ref{lem6} for $k_1<k_2\leq k_{2^l}$
\begin{multline}
E\left(\sum_{i=k_1}^{k_2}\left(\frac{1}{p}\bigg\|\sum_{j\in B_i}\tilde{\Phi}_K(X_j)\bigg\|^2 -\frac{1}{p}E\bigg\|\sum_{j\in B_1}\tilde{\Phi}_K(X_j)\bigg\|^2\right)\right)^2\\
\leq C(k_2-k_1)\left(\frac{1}{p^2}E\bigg\|\sum_{j\in B_1}\tilde{\Phi}_K(X_j)\bigg\|^4+\sum_{m=3}^{\lceil (k_2-k_1)/3\rceil}\left(pK^2\bar{a}_m+p^2K^4\bar{\beta}_m\right)\right)\displaybreak[0]\\
\leq C(k_2-k_1)K^4\left(1+\sum_{m=3}^{\infty}p^{\frac{5}{2}}a_{(m-1)p}+\sum_{m=3}^{\infty}p^2\beta_{(m-1)p}\right)\leq C(k_2-k_1)K^4,
\end{multline}
as $\sum_{m=3}^{\infty}p^{\frac{5}{2}}a_{(m-1)p}\leq \sum_{m=1}^{\infty}m^{3/2}a_{m}<\infty$ and $\sum_{m=3}^{\infty}p^2\beta_{(m-1)p}\leq \sum_{m=1}^{\infty}m\beta_{m}<\infty$. With the help of this moment bound and Lemma \ref{lem4} we get
\begin{multline}
\sum_{l=1}^\infty P\left(\max_{n=2^{l-1}+1,\ldots,2^{l}}\bigg|\frac{1}{k}\sum_{i=1}^{k}\bigg(\frac{1}{p}\bigg\|\sum_{j\in B_i}\tilde{\Phi}_K(X_j)\bigg\|^2 -\frac{1}{p}E\bigg\|\sum_{j\in B_1}\tilde{\Phi}_K(X_j)\bigg\|^2\bigg)\bigg|>\epsilon\right)\displaybreak[0]\\
\leq\frac{1}{\epsilon^2}\sum_{l=1}^\infty \frac{1}{k_{2^{l-1}}^2} E\max_{n=2^{l-1}+1,\ldots,2^{l}}\left(\sum_{i=1}^{k}\bigg(\frac{1}{p}\bigg\|\sum_{j\in B_i}\tilde{\Phi}_K(X_j)\bigg\|^2 -\frac{1}{p}E\bigg\|\sum_{j\in B_1}\tilde{\Phi}_K(X_j)\bigg\|^2\bigg)\right)^2\\
\leq C \frac{1}{\epsilon^2}\sum_{l=1}^\infty \frac{1}{k_{2^{l-1}}^2}k_{2^l}l^2 K^4\leq C\sum_{l=1}^\infty \frac{1}{k_{2^{l-1}}}l^2 K^4<\infty
\end{multline}
if we choose $K_l=2^{\eta l}$ and $\eta<\frac{c_1}{4}$, as we assumed $k_{2^{l-1}}\approx\frac{2^{l-1}}{p_{2^{l-1}}}\geq C2^{-lc_1}$. The Borel-Cantelli-lemma implies that $\tilde{I}_n\rightarrow0$ almost surely and thus $E^\star\left[\left\|S_{n1}^\star\right\|^2\right]\rightarrow E\left\|N_1\right\|^2$. This completes the proof.
\end{proof}

\end{document}